\documentclass[12pt]{amsart}

\usepackage{amssymb, amsmath, amsthm, amsfonts}
\usepackage{hyperref}
\usepackage{xcolor,graphicx}
\usepackage{changes}
\theoremstyle{plain}
\newtheorem{theorem}{Theorem}[section]

\newtheorem{lemma}[theorem]{Lemma}
\newtheorem{corollary}[theorem]{Corollary}
\newtheorem{proposition}[theorem]{Proposition}

\theoremstyle{definition}

\newtheorem{remark}[theorem]{Remark}
\newtheorem{example}[theorem]{Example}

\def\h{\mathbb H}
\def\r{\mathbb R}
\def\c{\mathbb C}

\begin{document}
\title[Curve shortening flow in the hyperbolic plane]{Geometric aspects of the curve shortening flow in the hyperbolic plane }
\author{Ivan Krznari\'{c}}
\address{Faculty of Economics and Business. University of Zagreb, Trg John F. Kennedy 6, 10 000 Zagreb, Croatia}
\email{ikrznaric@efzg.hr}
\author{Rafael L\'opez}
\address{Departamento de Geometr\'{\i}a y Topolog\'{\i}a Universidad de Granada 18071 Granada, Spain}
\email{rcamino@ugr.es}

\subjclass[2020]{53E10, 53C42} 
\keywords{curve shortening flow, hyperbolic plane, soliton, geodesic translation, ancient solution}

\begin{abstract}
We define a new notion of translations in the hyperbolic plane and explicitly solve the equation of the curve shortening flow. Next, we consider the class of ancient convex solutions and solve the equation of the curve shortening flow when the curvature function is given by separation of variables. Lastly, we prove some area estimates for closed ancient solutions of the curve shortening flow. 
\end{abstract}

\maketitle

 \section{Introduction}

 Let $\gamma\colon I\to\h^2$ be a curve parametrized by arc-length in the hyperbolic plane $\h^2$.   A one-parameter family  of parametrized curves $\{\gamma_t\colon I\to\h^2\}_{t\in [0, T\rangle}$ is said to be the \textit{curve shortening flow} of $\gamma$ if 
 \begin{equation}\label{flow}
 \left\{
 \begin{split}
  \left(\partial_t \gamma_t\right)^\perp&= \kappa(t, \cdot) N(t, \cdot) \\ 
 \gamma_0&=\gamma   \end{split}\right.
\end{equation}
 where    $\kappa(t,\cdot)$ and $N(t,\cdot)$ are the geodesic curvature and the unit normal vector of $\gamma_t$ and $ \left(\partial_t \gamma_t\right)^\perp$ is the normal part of velocity $  \gamma_t$. The theory of the CSF in the Euclidean plane $\r^2$ was first developed in the 1980s by Gage and Hamilton (\cite{gh,ga}) revealing that its  study is non-trivial. This is due to    the difficulty of calculating explicit solutions to the flow and also due to the development of singularities during the evolution. Therefore, it is convenient to study the solutions of the CSF within some specific class of curves. Common examples of such classes are solitons, translating solutions and ancient convex solutions. Although the CSF can be studied on arbitrary surfaces, the most interesting situations, besides the Euclidean plane, arise when studying the evolution of curves in the models of the remaining two $2$-dimensional geometries: the  sphere $\mathbb{S}^2$ and the hyperbolic plane $\h^2.$ The curve shortening flow on $\mathbb{S}^2$ has been considered via various approaches (\cite{blz,bl}). 
 
 To the best of our knowledge, the hyperbolic plane $\h^2$ is the ambient in which the curve shortening flow is studied the least. In  \cite{dt,wx},   the authors have classified the solutions which evolve by isometries during the flow and have calculated some explicit examples of such curves using their characterization.   The mean curvature flow in higher dimensions has been   studied in    \cite{an,cm1,cm2}. Following ideas of  \cite{cm1}, in \cite{wy} a modified flow of \eqref{flow} has been studied preserving the length of the curves.

The purpose of this paper is to give some progress of the CSF in $\h^2$ emphasizing some of its geometric aspects. For example, in Euclidean plane $\r^2$, the study of the CSF by geodesic translations reduces to the study of solitons of the CSF because Eq. \eqref{flow} is invariant by translation. However the situation in $\h^2$ is different. The class of solitons appears in a natural way when we study those solutions of the CSF invariant by a one-parameter groups of isometries. In this paper we are also interested in the solutions obtained by geodesic translations. We will see that such solutions are curves in $\h^2$  with constant curvature. Other interest in this paper is the role of the angle parameter.  Unlike in the Euclidean plane, it is not possible to geometrically and globally define the angle formed by tangent vectors with a fixed direction. Grayson is able to define a global angle where a `density' is added at each point. Motivated by the paper \cite{dhs}, we obtain the solutions of the CSF when $\kappa^2$ is given by separation of variables, proving that besides solitons, the solutions are the curves with constant curvature.

This paper is organized as follows. In Section \ref{sec2} we introduce the notation of the one-parameter groups of isometries in $\h^2$ and we give examples of CSF for curves with constant curvature. In Section \ref{sec3} we turn our attention to the study of  solutions of the CSF by geodesic translations, proving that the initial curve is a curve with constant curvature. We explicitly find the CSF. In Section \ref{sec4} we discuss when the angle can be considered as a parameter in the flow. By taking the natural angle $\varphi$ defined by Grayson \cite{gr}, we find the solutions of the CSF when the pressure function $\kappa^2$ is given by separation of variables $\kappa^2(t,\varphi)=a(t)+b(\varphi)$. Besides the trivial solutions, which do not depend on $t$, we prove in Thm. \ref{t42} that the solution of the CF is a family of curves with constant curvature, that is, circle,  horocycles or equidistant lines. We also find the evolution of the Euclidean angle $\theta$ which can be defined in the upper half-plane model of the hyperbolic plane. Finally in Sect. \ref{sec5}, we prove that the limit of the area enclosed by ancient simply closed convex solution to the CSF is infinity (Thm. \ref{t51}). In particular, the time interval of the flow is $(-\infty,0]$. 

\section{Examples of CSF in $\h^2$} \label{sec2}

Consider the upper half-plane model of the hyperbolic plane $\h^2$, that is,   $\r^2_+= \left\{ (x,y) \in \r^2\colon y > 0\right\}$ endowed with the   Riemannian metric $\langle,\rangle = \frac{dx^2 + dy^2}{y^2}$. Let $\r^2_0=\{(x,0)\colon x\in\r\}$ the horizontal line of equation $y=0$. The ideal boundary  $\h^2_\infty$ is the one-point compactification of $\r^2_0$, that is,  $\h^2_\infty=\r^2_0\cup\{\infty\}$. Identifying $\r^2_+$ with the upper complex plane $\c^+=\{z=a+ib\in\c\colon b>0\}$, the group of orientation-preserving isometries of $\h^2$   coincides with the M\"obius transformations $T(z) = \frac{az+b}{cz+d}$ with real coefficients such that $ad-bc=1$.   There are three one-parameter subgroups of isometries in $\h^2$ which are described as follows:
\begin{enumerate}
\item Hyperbolic translations. We have $\mathcal{H}=\{H_t\colon t\in\r\}$, where $H_t(z) = e^{t}z$.
\item Parabolic translations. We have $\mathcal{P}=\{P_t\colon t\in\r\}$, where $P_t(z) = z + t$.
\item  Rotations. We have $\mathcal{R}=\{ R_t\colon t\in\r\}$, where $R_t(z) =    \frac{\cos(t)z - \sin(t)}{\sin(t)z +\cos(t)}$.
\end{enumerate}

These three subgroups allow us to find a basis of Killing vector fields of $\h^2$. The Killing vector fields are obtained by 
$$
X_H(z)=\frac{d}{dt}{\Big|}_{t=0}H_t(z),\quad X_P(z)=\frac{d}{dt}{\Big|}_{t=0}P_t(z),\quad X_R(z)=\frac{d}{dt}{\Big|}_{t=0}R_t(z).
$$
A simple computation gives 
\begin{equation}\label{kv}
\begin{split}
X_H(x,y)&=x\partial_x+y\partial_y,\\
 X_P(x,y)&=\partial_x,\\
 X_R(x,y)&=-(1+x^2-y^2)\partial_x+2xy\partial_y.
 \end{split}
\end{equation}
Notice that the vector field $X_R$ can be written as $X_R=-X_P-(x^2-y^2)\partial_x+2xy\partial_y$.  

Let $\gamma\colon I \rightarrow \h^2, \gamma = \gamma(u),$ be a parametrized curve where $u$ is an arbitrary parameter. If $\gamma(u) = (x(u), y(u))$,  its   curvature  $\kappa$  is given by 
\begin{equation}\label{ke0}
\kappa =  y\frac{x'y'' - x''y'}{(x'^2 + y'^2)^{3 \over 2}}+\frac{x'}{\sqrt{x'^2+y'^2}}.
\end{equation}
There is a relationship between the curvature $\kappa$ of $\gamma$ and the Euclidean curvature $\kappa_e$ of $\gamma$ when $\gamma$ is considered as a curve in $\r^2_{+}$ endowed with the Euclidean metric. This relation is 
\begin{equation}\label{ke}
\kappa=y\kappa_e+\frac{x'}{\sqrt{x'^2+y^2}}.
\end{equation}
This identity is also a consequence of \eqref{ke0}. 

We know how to describe the curves in $\h^2$ with constant curvature. In the upper-half plane model, these curves are   the intersections of circles and straight-lines of $\r^2$ with $\r^2_+$.  The geodesics of $\h^2$ are given by vertical half-lines from $\r^2_0$ and half-circles with center at $\r^2_0$. All other curves in $\h^2$ with constant curvature are the following:
\begin{enumerate}
\item Circles.  A circle   is the locus of all points a fixed distance from a given point. The distance is called the radius of the circle and the given point is its  center.  From the Euclidean viewpoint, they are Euclidean circles contained in $\r^2_+$.  If the radius is $R$, the   curvature is $\kappa =\coth(R)$. Notice that $\kappa>1$.
\item Horocycles. A horocycle is either a circle tangent to the line $\r^2_0$ or a  line parallel to $\r^2_0$.  Its   curvature is $\kappa=1$. 
\item  Equidistant lines. An equidistant line is the set of points in $\h^2$ equidistant from a given geodesic.  Equidistant lines are  (non-geodesics)  arcs of circles and (non-vertical) half-lines from points of $\r^2_0$. If the equidistant line is of equation $y=mx+n$, then its curvature is $\kappa=1/\sqrt{1+m^2}$.  The   curvature $\kappa$ is a number between $0$ and $1$. 
\end{enumerate}

We give examples of CSF for curves with constant curvature. See also \cite[p. 76--77]{gr}.

\begin{example}[Flow of geodesics]
The evolution under CSF of a geodesic $\gamma\colon \r \rightarrow \h^2$ is  given by $\gamma(t,s) = \gamma(s)$, $ t \in \r$. This is because $\kappa=0$.
\end{example}

\begin{example}[Flow of circles]
\label{ex:circle}A circle of  center     $(a,b) \in \r^2_+$ and radius $R$ coincides with the  the Euclidean circle with center at   $(a, \cosh(R))$ and radius   $b\sinh(R)$.  A parametrization $\gamma$ of 
  a circle of   radius $R$ and   center $(0,1)$  is 
$$\gamma(s)=(0,\cosh(R))+\sinh(R)\left(\cos(s),  \sin(s)\right).$$
 Analogously to the Euclidean case, an initial guess   for its flow is the family of all circles centered at $(0,1)$. Therefore, let 
 $$\gamma_t(s) =(0,  \cosh(r(t)) )+\sinh(r(t))(\cos(s), \sin(s)).$$ 
 We compute all of the quantities of \eqref{flow}. We know $\kappa=\coth(r(t))$ and 
 \begin{equation*}
 \begin{split}
 N&=-( \cosh(r(t)) + \sinh(r(t))\sin(s))(\cos(s),\sin(s)),\\
 \partial_t\gamma_t&=r'(\cosh(r(t))\cos(s),\sinh(r(t))+\cosh(r(t))\sin(s)).
 \end{split}
 \end{equation*}
  Then
 $$\langle( \partial_t\gamma_t)^\perp,N\rangle=-r'(t).$$
 Thus \eqref{flow} reduces into 
 \begin{equation*}
 \left\{
 \begin{split}
  r' &= -\coth(r),\\
   r(0) &= R \end{split}
   \right.
   \end{equation*}
    The solution   is the function $r(t) = \cosh^{-1}(\cosh(R)e^{-t})$.
    Since $\cosh^{-1}$ is defined for numbers  bigger than or equal to $1$, we get $t_{\text{max}} = \ln(\cosh(R))$. At the maximal time we have $r = 0$ so the circles contract to a point in finite time under the CSF.
\end{example} 

\begin{example}[Flow of horocycles]
\label{ex:circle2}
Let $\gamma$ be the horocycle 
$$\gamma(s)=(0,R)+R\left( \cos(s), \sin(s)\right).$$
The ideal boundary  of $\gamma$ is $(0,0)$. Since we guess that the CSF is formed by all horocycles with the same ideal boundary point $(0,0)$, let  
 $$\gamma_t(s) = (0,r(t))+r(t)(\cos(s),  \sin(s)).$$ 
 Now we have $\kappa=1$ and  
 \begin{equation*}
 \begin{split}
 N&=-( r(t)(1+\sin(s))(\cos(s),\sin(s)),\\
 \partial_t\gamma_t&=r'(\cos(s),1+\sin(s)).
 \end{split}
 \end{equation*}
  Therefore
 $$\langle( \partial_t\gamma_t)^\perp,N\rangle=-\frac{r'(t)}{r(t)}.$$
 Then \eqref{flow} now is 
 \begin{equation*}
 \left\{
 \begin{split}
r'&= -r,\\
   r(0) &= R \end{split}
   \right.
   \end{equation*}
    The solution   is   $r(t) = Re^{-t}$.
  In this case, we have  $t_{\text{max}} =\infty$ and $\lim_{t\to\infty}r(t)=0$. Thus the horocycles  contract to   the ideal boundary point $(0,0)$ under the CSF.
\end{example}

\begin{example}[Flow of equidistant lines]
\label{ex:circle3}
Let $0<c<R$. Let $\gamma$ be the equidistant line 
$$\gamma(s)=(0,c)+R\left( \cos(s), \sin(s)\right).$$
From the Euclidean viewpoint, $\gamma$ parametrizes the Euclidean circle centered at $(0,c)$ with radius $\sqrt{R^2-c^2}$. The   curvature is $\kappa=c/R$. Again, we guess that the CSF of such  an equidistant line is formed out of all the equidistant lines in $\h^2$ with the same ideal boundary as the initial curve. Thus, let  $0<c(t)<r(t)$ with $r(t)^2-c(t)^2=R^2-c^2:=k^2,$$k>0$. Define
 $$\gamma_t(s) = (0,\sqrt{r(t)^2-k^2})+r(t)(\cos(s),  \sin(s)).$$ 
 Now we have $\kappa=\frac{\sqrt{r^2(t)-k^2}}{r(t)}$ and  
 \begin{equation*}
 \begin{split}
 N&=-(\sqrt{r(t)^2-k^2}+r(t)\sin(s))(\cos(s),\sin(s)),\\
 \partial_t\gamma_t&=r'(\cos(s),\frac{r}{\sqrt{r^2-k^2}}+\sin(s)).
 \end{split}
 \end{equation*}
  Then
 $$\langle( \partial_t\gamma_t)^\perp,N\rangle=-\frac{r'(t)}{\sqrt{r(t)^2-k^2}}.$$
 Thus \eqref{flow} is 
 \begin{equation*}
 \left\{
 \begin{split}
r'&= -\frac{r^2-k^2}{r},\\
   r(0) &= R \end{split}
   \right.
   \end{equation*}
    The solution   is the function $r(t) = \sqrt{k^2+(R^2-k^2)e^{-2t}}$.
  In this case, we have  $t_{\text{max}} =\infty$ and $\lim_{t\to\infty}r(t)=k$. Thus the equidistant lines $\gamma_t$  contract to the geodesic $s\mapsto k(\cos(s),\sin(s))$, which coincides with the geodesic to which the initial curve $\gamma$ is equidistant to.  
\end{example} 

As a consequence of the examples above, we conclude the following result. 

\begin{proposition}
The limit of the curve shortening flow   
\begin{enumerate}
\item of a  circle is its center;
\item of a horocycle is its ideal boundary point;
\item of an equidistant line is the geodesic to which the initial curve is equidistant to.
\end{enumerate}
\end{proposition}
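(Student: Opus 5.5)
The plan is to read off each limit from the explicit solutions in Examples \ref{ex:circle}, \ref{ex:circle2} and \ref{ex:circle3}. Two preliminary reductions are needed. First, by uniqueness of the CSF for smooth initial data of bounded geometry, the flows found there (via an ansatz) are \emph{the} curve shortening flows of those curves. Second, the flow \eqref{flow} is invariant under the isometries of $\h^2$: if $\gamma_t$ solves \eqref{flow}, so does $T\circ\gamma_t$ for every M\"obius transformation $T$. Since isometries act transitively on circles of a given radius, on horocycles, and on equidistant lines at a given distance from their geodesic, it suffices to treat the normalized curves used in the examples. Each of those items ($R$; the ideal point $(0,0)$; the pair $(R,c)$) can be carried to the normalized position by a hyperbolic translation $H_t$, a parabolic translation $P_t$ and, for circles, the fact that any point can be moved to $(0,1)$.

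\textbf{Circles.} With center $(0,1)$ we have $r(t)=\cosh^{-1}(\cosh(R)e^{-t})$, so $r(t)\to0$ as $t\to t_{\max}=\ln\cosh R$. The curve $\gamma_t$ is the Euclidean circle with center $(0,\cosh r(t))$ and radius $\sinh r(t)$. Both quantities converge, to $1$ and $0$ respectively, so $\gamma_t$ converges in the Hausdorff sense to the point $(0,1)$, which is the center. Equivalently, every point of $\gamma_t$ lies at hyperbolic distance $r(t)\to0$ from the center.

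\textbf{Horocycles.} Here $r(t)=Re^{-t}\to0$ as $t\to\infty$. The Euclidean circles of center $(0,r)$ and radius $r$ shrink to $(0,0)$ in the closed half-plane, so $\gamma_t$ converges to the ideal point $(0,0)\in\h^2_\infty$. The horocycle tangent to $\r^2_0$ at $(0,0)$ has exactly that point as its ideal boundary. A horizontal-line horocycle is the image of this one under an isometry sending $(0,0)$ to $\infty$, and the same conclusion holds there.

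\textbf{Equidistant lines.} In the normalized case $r(t)=\sqrt{k^2+(R^2-k^2)e^{-2t}}\to k$ and $\sqrt{r^2-k^2}\to0$. The Euclidean circle with center $(0,\sqrt{r^2-k^2})$ and radius $r$ therefore converges, smoothly on compact subsets of $\r^2_+$, to the semicircle $|z|=k$. This is a geodesic. Moreover, for every $t$ the curve $\gamma_t$ meets $\r^2_0$ at $(\pm k,0)$, the same endpoints as that geodesic. An equidistant line shares its ideal endpoints with the unique geodesic it is equidistant to, so the limit geodesic is precisely the one the initial curve was equidistant to. Straight-line equidistants $y=mx$ are images of these arcs under isometries, so they are covered as well.

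The main obstacle is making the word ``limit'' precise and justifying that the explicit families really are the flows. The examples show these families satisfy \eqref{flow} but do not by themselves give uniqueness. I would invoke standard uniqueness for the CSF, which applies to compact curves and to complete curves of bounded curvature in $\h^2$. For the noncompact cases I would state the convergence as local smooth convergence in $\h^2$, or as Hausdorff convergence in the closure $\r^2_+\cup\h^2_\infty$ for the horocycle.
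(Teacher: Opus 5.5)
Your proposal is correct and takes the same route as the paper: the paper gives no separate proof and reads the three limits directly off the explicit solutions in Examples \ref{ex:circle}, \ref{ex:circle2} and \ref{ex:circle3}. Your additional steps go beyond what the paper does but do not change the argument. These are uniqueness of the flow (the paper only guesses and verifies the ansatz), the reduction of arbitrary circles, horocycles and equidistant lines to the normalized ones via isometry invariance, and a precise meaning for ``limit''.
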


We finish this section by finding  all the solutions to the CSF that evolve by the three types of one-parameter group of isometries of $\h^2$, namely, $\mathcal{H}$, $\mathcal{P}$ and $\mathcal{R}$.  These solutions will be called {\it solitons}. Here we will use the notation of the Killing vector fields \eqref{kv}.

\begin{proposition} \label{prs}
Let $\gamma $ be a curve in $\h^2$.  If under CSF $\gamma$ evolves by:
\begin{enumerate}
\item Hyperbolic translations of  $\h^2$, then $\gamma$ satisfies
\begin{equation}\label{sh}
\kappa=\langle N,X_H\rangle,\quad \mbox{(hyperbolic soliton)}.
\end{equation}

\item Parabolic translations of  $\h^2$, then $\gamma$ satisfies  
\begin{equation}\label{sh2}
\kappa=\langle N,X_P\rangle,\quad \mbox{(parabolic soliton)}.
\end{equation}

\item Rotations of  $\h^2$, then $\gamma$ satisfies
\begin{equation}\label{sh3}
\kappa= \langle N,X_R\rangle,\quad \mbox{(rotational soliton)}.
\end{equation}
\end{enumerate}
\end{proposition}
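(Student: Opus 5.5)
The plan is to treat the three cases at once. Let $\{G_t\}$ be any of the one-parameter groups $\mathcal{H}$, $\mathcal{P}$, $\mathcal{R}$, and let $X$ be its Killing field from \eqref{kv}. Saying that $\gamma$ evolves under CSF by $\{G_t\}$ means the flow has the form $\gamma_t(s)=G_{t}(\gamma(s))$, possibly after a reparametrization $s\mapsto\phi_t(s)$. Such a reparametrization only adds a tangential component to $\partial_t\gamma_t$, so it does not affect the normal part in \eqref{flow}.

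The first step is to differentiate in $t$. Since $G_{t+h}=G_h\circ G_t$, we get
\[
\partial_t\gamma_t(s)=\frac{d}{dh}\Big|_{h=0}G_h(\gamma_t(s))=X(\gamma_t(s)).
\]
This holds for every $t$, not only $t=0$. Thus \eqref{flow} becomes $\langle X(\gamma_t),N_t\rangle=\kappa_t$ along $\gamma_t$ for all $t$.

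Next, evaluate at $t=0$, where $\gamma_0=\gamma$. This gives $\kappa=\langle N,X\rangle$ along $\gamma$. Substituting $X=X_H,X_P,X_R$ yields \eqref{sh}, \eqref{sh2}, \eqref{sh3} respectively.

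As a consistency remark, the identity at time $t$ is equivalent to the one at time $0$. The reason is that $G_t$ is an isometry. It preserves curvature and maps $N$ to the normal of $\gamma_t$, up to orientation, which orientation-preserving Möbius maps fix. It also preserves $X$, because $(G_t)_*X=X$: the group is abelian and $X$ generates it. So the condition at $t=0$ is both necessary and sufficient.

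The only delicate points are two. One is justifying that the tangential reparametrization can be ignored; the other is that the time parameter of the group is matched with the flow time. One could allow $\gamma_t=G_{\lambda(t)}\circ\gamma$ for a function $\lambda$. Then the relation becomes $\lambda'(t)\langle N,X\rangle=\kappa$, and the statement as written implicitly normalizes $\lambda(t)=t$. Otherwise one obtains $\kappa=c\langle N,X\rangle$, and rescaling $t$ absorbs the constant $c$. I would note this normalization explicitly; the rest is routine.
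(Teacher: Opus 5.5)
Your proof is correct, and it takes a cleaner route than the paper's.

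\textbf{How the paper argues.} It works case by case in coordinates. It writes $\gamma_t=e^t\gamma$, $\gamma_t=\gamma+t(1,0)$ and $\gamma_t=R_t\circ\gamma$ explicitly. It then computes $\partial_t\gamma_t$, $N(t,s)$ and $\kappa(t,s)$ at an arbitrary time and checks that the $t$-dependence cancels in \eqref{flow}. For rotations this last step is only asserted as ``a computation''.

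\textbf{How you argue.} You use the group law $G_{t+h}=G_h\circ G_t$ to get $\partial_t\gamma_t=X(\gamma_t)$ for all three groups at once. The required identity then follows by evaluating \eqref{flow} at $t=0$, without ever computing the transformed normal. Your observation that $G_t$ is an orientation-preserving isometry with $(G_t)_*X=X$ explains why the equation at time $t$ coincides with the one at time $0$, and so it also gives the converse.

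\textbf{What each buys.} Your route is uniform and avoids the messy rotational computation. The paper's coordinate work gives the equation in explicit form: with $N=\frac{y}{\sqrt{x'^2+y'^2}}(-y',x')$, \eqref{sh} becomes a concrete ODE in $x(s),y(s)$, which is what one uses to produce examples of solitons.

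Two small remarks.

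First, your argument proves \eqref{sh3} with $X_R$ read as the defining derivative
\[
\frac{d}{dt}\Big|_{t=0}R_t(z)=-(1+z^2),\qquad\text{i.e.}\qquad X_R=-(1+x^2-y^2)\partial_x-2xy\partial_y .
\]
The $\partial_y$-component displayed in \eqref{kv} has the opposite sign, and the displayed field is not even Killing. So ``substituting $X_R$ from \eqref{kv}'' must mean substituting the derivative, not the displayed formula. Here your coordinate-free route is a genuine advantage.

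Second, in your closing aside, a constant $c=\lambda'$ cannot be absorbed by rescaling the flow time. The CSF in $\h^2$ has no time-rescaling symmetry, because there are no homotheties to compensate. The constant can only be absorbed by replacing $X$ with $cX$, i.e.\ by reparametrizing the group. For $\mathcal H$ and $\mathcal R$ it is in general a genuine parameter of the soliton family. For $\mathcal P$, a constant $c>0$ can be removed by applying the isometry $H_{-\log c}$, since $H_sP_tH_{-s}=P_{e^st}$. This does not affect the statement itself, which assumes $\gamma_t=G_t\circ\gamma$.
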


\begin{proof}
\begin{enumerate}
\item The flow by hyperbolic translations is given by $\gamma_t(s)=e^t\gamma(s)$. If  $\gamma(s)=(x(s),y(s))$, then  $N(t,s)=e^t N(s)$ and  $\kappa(t,s)=\kappa(s)$. Notice that $N=\frac{y}{\sqrt{x'^2+y'^2}}(-y',x')$. Since $\partial_t\gamma_t(s)=e^t\gamma$, then \eqref{flow} is simply $\kappa= \langle N,\gamma\rangle$.  
 \item Let $\gamma(s)=(x(s),y(s))$ and consider the flow by parabolic translations  $\gamma_t(s)=\gamma(s)+t(1,0)$. Again $N(t,s)=N(s)$ and $\kappa(t,s)=\kappa(s)$. Since $\partial_t\gamma_t(s)=(1,0)$, then \eqref{flow} is $\kappa=\langle N,(1,0)\rangle$.
 \item 
If $\gamma(s)=(x(s),y(s))$, the flow by   rotations is given in complex notation by 
$$\gamma_t(s)=\frac{\cos(t)\gamma(s)-\sin(t)}{\sin(t)\gamma(s)+\cos(t)}.$$
Since $R_t$ are isometries of $\h^2$, then $\kappa(t,s)=\kappa(s)$.  If the coordinates of $\gamma_t(s)$ area $((\gamma_t(s))_x,(\gamma_t(s))_y)$, the unit normal vector is 
$$N(t,s)=\frac{(\gamma_t(s))_y}{\sqrt{x'^2+y'^2}}\left(\begin{array}{ll}
A& B \\ B&-A\end{array}\right)\left(\begin{array}{l}
-y' \\ x' \end{array}\right)$$
where 
$$A= y^2\sin(t)^2-x(s)( x\sin (t)+\cos (t))^2,\quad B=2\sin(t) y( x\sin (t)+\cos (t)).$$
    Then a computation  yields \eqref{sh3}.  
    \end{enumerate}
    \end{proof}

\begin{figure}[hbtp]
\begin{center}
\includegraphics[width=.3\textwidth]{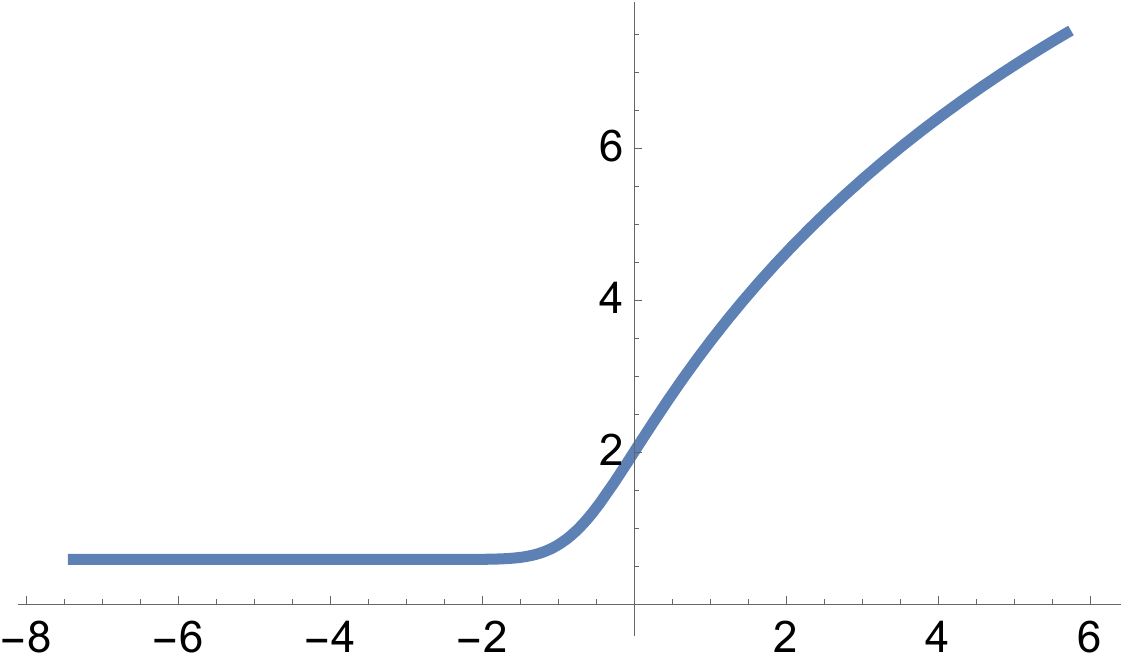},\quad \includegraphics[width=.3\textwidth]{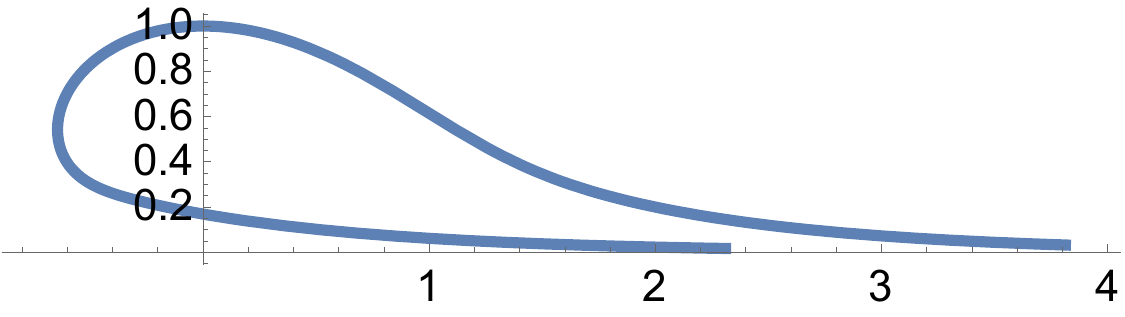}\quad \includegraphics[width=.3\textwidth]{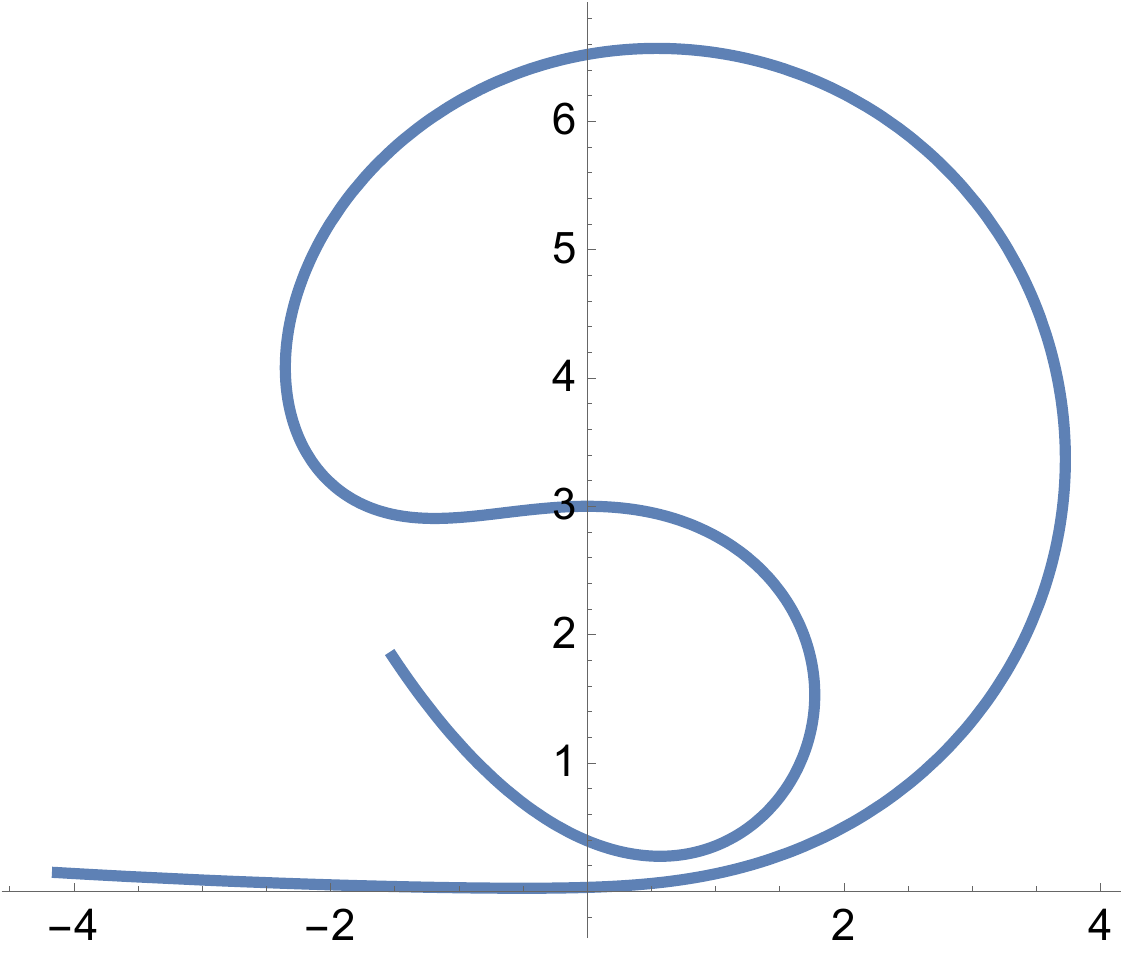}
\end{center}
\caption{Examples of solitons: hyperbolic (left), parabolic (middle) and rotational (right).}\label{fig1}
\end{figure}

Solitons from the Prop. \ref{prs} have appeared in the literature when studying the mean curvature flow of surfaces in hyperbolic spaces of arbitrary dimension. See \cite{brl,lrm}.

\section{Geodesic translating solutions}
\label{sec3}

When we consider each one of the three types of one-parameter subgroups of isometries $\mathcal{H}$, $\mathcal{P}$ and $\mathcal{R}$ defined in Sect. \ref{sec2} and we apply them to a given point of the hyperbolic plane, the orbit of this point is not a geodesic in general. In fact, we obtain, respectively, equidistant lines or geodesics,  horocycles and circles.
 
In this section we will employ a   suitable notion of a translations in $\h^2$ where points move along geodesics. Given a point $p \in \h^2$ and a tangent vector $V \in T_p \h^2$,  we define \textit{the geodesic translation} of  $p$ in the direction $v$ as a map $p \mapsto \eta(1;p,V)$,  where $\eta(t)=\eta(t;p,V)$ is the geodesic which at time $t = 0$ passes through $p$ with speed $V$. We point out that   geodesic translations are not isometries of the hyperbolic plane.

In order to compute   explicit parametrizations of these geodesics, we   relax the condition that $|V|=1$.  Depending on $V$, the parametrizations of the geodesics $\eta$ are the following. Suppose $p= (x,y) \in \h^2$ and $V=(v,w)$. Then we have  the following parametrizations:
\begin{enumerate}
\item Case $V$ is vertical. If $V=(0,w)$, then  $\eta(t) = (x, y e^{ w t})$. 
\item Case $V$ is horizontal. If $V=(v,0)$, then $
  \eta(t) = (x + y \tanh(vt), \frac{y}{\cosh(vt)})$. 
  \item General case of $V$. If $V=(v,w)$, then
 \begin{equation*}
 \begin{split}
 \eta(t) &= (a + R\tanh(t+t_0), \frac{R}{\cosh(t+t_0)}),\\
a& = x + \frac{w}{v}y,\,  R = \frac{y \sqrt{v^2+w^2}}{v},\, t_0 = \cosh^{-1} \left( \frac{R}{y}\right).
\end{split}
\end{equation*}
 \end{enumerate}
In what follows, we will compute all curves that evolve by geodesic translations in the hyperbolic plane. Without loss of generality, we can assume that $V=(0,1)$ (vertical direction), $V=(1,0)$ (horizontal direction), $V=(v,w)$, $v^2+w^2=1$ (arbitrary direction). 

 
\begin{proposition}
\label{prop:A} 
Let $\gamma(s)=(x(s),y(s))$  be a parametrized curve in $\h^2$ that under CSF evolves by geodesic translations along $V=(0,1)$.   Then  $\gamma$ is a horocycle, an equidistant line or a geodesic and the flow is given by $\gamma(t,s) = (as+c, (bs+d)e^t)$, $a,b,c,d\in\r$ and  $a^2+b^2\not=0$.  
\end{proposition}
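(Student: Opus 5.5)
The plan is to write the flow explicitly, substitute it into \eqref{flow}, and reduce everything to a single ODE for the initial curve. Following the parametrization of vertical geodesics given above ($\eta(t)=(x,ye^{wt})$ with $w=1$), a curve evolving by geodesic translations along $V=(0,1)$ has flow
$$\gamma(t,s)=\bigl(x(s),\,y(s)e^{t}\bigr).$$
The first step is to compute the two sides of \eqref{flow} for this family. We have $\partial_t\gamma=(0,ye^t)$. Since $\gamma_t'=(x',e^ty')$, the unit normal (with the convention $N=\frac{y}{\sqrt{x'^2+y'^2}}(-y',x')$ used in the proof of Prop. \ref{prs}) is
$$N(t,s)=\frac{ye^t}{\sqrt{x'^2+e^{2t}y'^2}}\,(-e^ty',\,x').$$
Hence
$$\langle \partial_t\gamma,N\rangle=\frac{1}{y^2e^{2t}}\cdot ye^t\cdot\frac{ye^t x'}{\sqrt{x'^2+e^{2t}y'^2}}=\frac{x'}{\sqrt{x'^2+e^{2t}y'^2}}.$$

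Next I apply formula \eqref{ke0} to $\gamma_t=(x,e^ty)$. It gives
$$\kappa(t,s)=\frac{e^{2t}\,y\,(x'y''-x''y')}{(x'^2+e^{2t}y'^2)^{3/2}}+\frac{x'}{\sqrt{x'^2+e^{2t}y'^2}}.$$
The equation $\langle(\partial_t\gamma)^\perp,N\rangle=\kappa$ then has the same second term on both sides, and it reduces to
$$e^{2t}\,y\,(x'y''-x''y')=0.$$
Because $y>0$, this is equivalent to $x'y''-x''y'=0$, i.e.\ the Euclidean curvature of $\gamma$ vanishes. Evaluating at $t=0$ shows the condition is necessary, and since it does not depend on $t$, it is also sufficient for all $t$.

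It follows that $\gamma$ is a piece of a Euclidean straight line in $\r^2_+$. After an affine reparametrization, which is harmless because \eqref{flow} only involves the normal component, we can write $x=as+c$, $y=bs+d$ with $a^2+b^2\neq0$. Substituting into the flow gives $\gamma(t,s)=(as+c,(bs+d)e^t)$. Finally, the list of constant-curvature curves from Sect. \ref{sec2} identifies the line:
\begin{enumerate}
\item $a=0$ gives a vertical line, which is a geodesic;
\item $b=0$ gives a horizontal line, which is a horocycle;
\item otherwise the line is a non-vertical half-line from a point of $\r^2_0$, which is an equidistant line.
\end{enumerate}

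The only delicate point is keeping the orientation and sign conventions of $N$ consistent, so that the first-order terms $x'/\sqrt{\cdot}$ really cancel instead of doubling. I would check this against the horocycle of Example \ref{ex:circle2} before trusting the final reduction.
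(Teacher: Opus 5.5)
Your proof is correct and follows the paper's argument: write the flow as $(x(s),y(s)e^t)$, compute $\partial_t\gamma$, $N$ and $\kappa$ via \eqref{ke0}, and observe that \eqref{flow} reduces to $x'y''-x''y'=0$, so $\gamma$ is a Euclidean line. Your sign worry is unfounded, since the conventions agree and the $x'/\sqrt{x'^2+e^{2t}y'^2}$ terms really do cancel (e.g.\ the horocycle $y=d$ with $x'>0$ has $\kappa=1$, upward $N$, and moves to $y=de^t$ with unit normal speed); one small inaccuracy is that the reparametrization to $x=as+c$, $y=bs+d$ is generally not affine (hyperbolic arc length on a vertical line is $\log y$), but any time-independent reparametrization is harmless for the reason you give.
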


\begin{proof} Since $\gamma$ evolves by geodesic translations along $V = (0,1),$ its flow is given by $$\gamma(t,s) = (x(s), y(s)e^t)$$. 
 By direct computation we get 
\begin{equation*}
\begin{split}
\partial_t \gamma(t,s) &= (0,  e^{t}y) \\
N(t,s) &= \frac{ye^t}{\sqrt{e^{2 t} y'^2+x'^2}}\left(- e^{  t}y',x' \right)\\
\kappa(t,s)&=\frac{e^{2 t} \left(x' \left(y y''+y'^2\right)-y x'' y'\right)+x'^3}{\left(e^{2 t} y'^2+x'^2\right)^{3/2}}
\end{split}
\end{equation*}
After some computations, the CSF equation \eqref{flow} is 
\begin{equation*}
 x'' y'-x' y''=0.
\end{equation*}
By \eqref{ke0}, this implies that the Euclidean curvature $\kappa_e$ is identically $0$, hence $\gamma$ is a straight-line. Therefore $\gamma$ is curve with constant curvature, that is,   a horocycle, an equidistant line or a geodesic. In any case, the parametrization of $\gamma$ is $\gamma(s)=s(a,b)+(c,d)$, where $a^2+b^2\not=0$. 
\end{proof}

  \begin{proposition} 
   Let $\gamma(s)=(x(s),y(s))$  be a parametrized curve in $\h^2$ that under CSF evolves by geodesic translations along $V=(1,0)$.   Then $\gamma$ is an equidistant line of curvature $\frac{1}{\sqrt{2}}$  and the flow is given by 
    \begin{equation}\label{t1}
  \gamma(t,s)=\left(s+(m-s)\tanh(t),\frac{m-s}{\cosh(t)}\right),
  \end{equation}
 where $m\in\r$ is an integration constant.  Furthermore, 
 \begin{enumerate}
\item the flow is defined for all $t \in \r$, where if $t \to \infty,$ then $\gamma_t$ approaches the geodesic $s\mapsto (m,s)$
and  if $t \to - \infty,$ then $\gamma_t$ approaches the ideal boundary $\h^2_\infty$;
\item for fixed $t$, the curve $\gamma_t$ is a equidistant line of curvature $\kappa=\tanh(t)$. 
\end{enumerate}
 
 \end{proposition}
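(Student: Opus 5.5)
The plan is to write down the flow explicitly, derive the condition that \eqref{flow} imposes on $\gamma$, and then read off the geometry. Since $\gamma$ evolves by geodesic translations along $V=(1,0)$, the parametrization of horizontal geodesics from Sect.~\ref{sec3} gives
$$\gamma(t,s)=\Big(x+y\tanh(t),\ \frac{y}{\cosh(t)}\Big),\qquad x=x(s),\ y=y(s).$$
From this I would compute, exactly as in the proof of Prop.~\ref{prop:A}:
\begin{itemize}
\item the velocity $\partial_t\gamma=\big(y\,\mathrm{sech}^2 t,\,-y\,\mathrm{sech}\, t\tanh t\big)$;
\item the tangent $\partial_s\gamma=\big(x'+y'\tanh t,\ y'\,\mathrm{sech}\, t\big)$;
\item the unit normal $N(t,s)$, from the formula $N=\frac{Y}{|\partial_s\gamma|_e}(-Y',X')$ used in Prop.~\ref{prs};
\item the curvature $\kappa(t,s)$, from \eqref{ke0}.
\end{itemize}
Equation \eqref{flow} then becomes the scalar identity $\langle \partial_t\gamma,N\rangle=\kappa$, which must hold for every $t$ and $s$.

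The key step is to exploit that this identity holds for all $t$. First, at $t=0$ we have $\partial_t\gamma=(y,0)$, so the condition reduces to
$$\kappa=-\frac{y'}{\sqrt{x'^2+y'^2}},$$
a soliton-like first-order relation for the initial curve. Next, I would clear denominators and write the identity as a polynomial in $\tanh t$ and $\mathrm{sech}\, t$ (using $\mathrm{sech}^2 t=1-\tanh^2 t$). Setting each coefficient to zero should force $x''y'-x'y''=0$. By \eqref{ke0} and \eqref{ke}, this means the Euclidean curvature vanishes, so $\gamma$ is a Euclidean line. Substituting a line into the $t=0$ relation, with $\kappa=1/\sqrt{1+m^2}$ for a line of slope $m$, fixes the slope to $-1$. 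This gives $\kappa=1/\sqrt2$ and, up to reparametrization, $\gamma(s)=(s,m-s)$. Plugging this into the flow yields \eqref{t1}. I expect the coefficient-matching in $t$ to be the main obstacle, since the expressions are messy. If a direct expansion is unwieldy, I would first assume $\kappa_e\neq0$ somewhere and differentiate the identity in $t$ at $t=0$ to obtain a second relation, then eliminate between the two relations to reach a contradiction.

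For the remaining assertions, set $u=m-s$. Then $\gamma_t$ is the Euclidean half-line
$$x=m-u(1-\tanh t),\qquad y=u\,\mathrm{sech}\, t,$$
which emanates from $(m,0)$. So each $\gamma_t$ is an equidistant line, and its curvature follows from the slope formula $\kappa=1/\sqrt{1+\text{slope}^2}$. Here $\text{slope}^2=(1+\tanh t)/(1-\tanh t)$. I would check this expression carefully against the stated value $\kappa=\tanh t$, keeping track of the orientation convention for $N$. The flow is manifestly defined for all $t\in\r$. As $t\to\infty$, the slope tends to infinity, so $\gamma_t$ converges to the vertical geodesic $x=m$. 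As $t\to-\infty$, we have $\mathrm{sech}\, t\to0$ and the slope tends to $0$, so $\gamma_t$ collapses onto $\r^2_0$, i.e.\ it approaches the ideal boundary $\h^2_\infty$.
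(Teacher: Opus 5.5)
Your argument for the main claim ($\gamma$ is the line $x+y=m$ and the flow is \eqref{t1}) follows the paper's route. The coefficient matching you expect to be the main obstacle is actually short. One finds $\langle\partial_t\gamma,N\rangle=-(y'+x'\tanh t)/D$, where $D^2=x'^2+y'^2+2x'y'\tanh t$. Multiply by $D^3$ and divide by $1+\tanh t>0$; the flow equation becomes
\[
(x'+y')\bigl(x'^2+y'^2+2x'y'\tanh t\bigr)+y\,(1-\tanh t)\,(x'y''-x''y')=0 .
\]
This is linear in $\tanh t$. Adding its two coefficients gives $(x'+y')^3=0$, so $x+y=m$, and then $x'y''-x''y'=0$. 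This is the paper's equation, which is linear in $e^{2t}$, written in other variables. Your $t=0$ relation is therefore not needed.

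The genuine gap is item (2), and it cannot be closed by keeping track of the orientation of $N$. Your slope computation is correct: $(1+\tanh t)/(1-\tanh t)=e^{2t}$, so $\gamma_t$ is the ray from $(m,0)$ of slope $-e^{t}$. Formula \eqref{ke0} with $X'=1-\tanh t$ and $D=\sqrt{2(1-\tanh t)}$ then gives
\[
\kappa(t)=\sqrt{\tfrac{1-\tanh t}{2}}=\frac{1}{\sqrt{1+e^{2t}}} .
\]
This is not $\pm\tanh t$, so no sign convention reconciles them. At $t=0$ the curvature must be $1/\sqrt2$, since the same statement asserts this for $\gamma_0=\gamma$, whereas $\tanh 0=0$. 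As $t\to\infty$ the curves approach a geodesic by item (1), so $\kappa\to 0$, whereas $\tanh t\to 1$.

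So item (2) as stated is false. Your write-up should say so and record $\kappa=1/\sqrt{1+e^{2t}}$, rather than try to reach the stated value. This corrected value agrees formally with $\kappa=v/\sqrt{v^2+e^{2t}(w-1)^2}$ from the paper's next proposition at $(v,w)=(1,0)$. It also agrees with Example~\ref{ex:circle3} for an equidistant line of initial curvature $1/\sqrt2$ (take $R^2=2k^2$ there).

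The paper's proof reaches $\tanh t$ by asserting that $\gamma_t$ is the line $Y=(X-x)/\sinh t$. That line joins $(x(s),0)$ to the single point $\gamma(t,s)$. It is a radius of the geodesic semicircle along which that point is translated, it depends on $s$, and it is not the curve $\gamma_t$.
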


 \begin{proof}
Since $\gamma$ evolves by geodesic translations along $V = (1,0),$ its flow is given by 
$$\gamma(t,s) = (x(s) + y(s) \tanh(t), \frac{y(s)}{\cosh(t)}).$$ 
Then $\gamma_t$ is the straight-line of equation $Y=m(X-x)$, where $m=1/\sinh(t)$. Thus its curvature is $1/\sqrt{1+m^2}=\tanh(t)$. By computing all the quantities in \eqref{flow}, we get 
\begin{equation*}
\begin{split}
\partial_t \gamma(t,s) &=\text{sech}(t) \left(y\ \text{sech}(t),-y \tanh (t)  \right) \\
N(t,s) &=\frac{y\text{sech}(t)}{\sqrt{2 \tanh (t) x' y'+x'^2+y'^2}}  \left(-\text{sech}(t)y' ,\tanh (t) y'+x'   \right)\\
\kappa(t,s)&=\frac{1}{\left(2 \tanh (t) x' y'+x'^2+y'^2\right)^{3/2}}\Big( -y \text{sech}^2(t) x'' y'+3 \tanh (t) x'^2 y'\\
&+x' \left(y \text{sech}^2(t) y''+\left(3-2 \text{sech}^2(t)\right) y'^2\right)+\tanh (t) y'^3+x'^3\Big)
\end{split}
\end{equation*}
Then \eqref{flow} becomes
$$e^{2 t} \left(x'+y'\right)^3+\left(x'-y'\right)^2 \left(x'+y'\right)+2 y \left(x' y''-x'' y'\right)=0.$$
Looking this equation on $e^{2t}$, we deduce  
\begin{equation*}
\begin{split}
0&= x'+y'\\
0&= \left(x'-y'\right)^2 \left(x'+y'\right)+2 y \left(x' y''-x'' y'\right).
\end{split}
\end{equation*}
From the first equation, we have  $x(s)+y(s) = m,$ for some constant  $m$. Now, the second equation implies $x' y''-x'' y'=0$, that is, $\gamma$ is a straight-line, which we can parametrize by $y(s)=-s+m$.  So, it follows that $\gamma$ parametrizes an equidistant line of curvature $\frac{1}{\sqrt{2}}$ and the flow is \eqref{t1}. 
The properties   (1) and (2) are immediate.

 \end{proof}
 
\begin{proposition}
Let $\gamma(s) = (x(s), y(s))$ be a parametrized curve in $\h^2$ that under CSF evolves by geodesic translations along $V=(v,w)$,  $v,w\not=0$.   Then $\gamma$ is an equidistant line of curvature $\kappa=\frac{v}{\sqrt{2(1-w)}}$ and the flow is given by 
\begin{equation}\label{t2}
\gamma(t,s)= \left(\frac{m \left(e^{2 t}-1\right) v+2 s}{-e^{2 t} (w-1)+w+1},\frac{2 e^t \left(m+\frac{s (w-1)}{v}\right)}{-e^{2 t} (w-1)+w+1}\right),
\end{equation}
 where $m\in\r$ is an integration constant.  Furthermore, 
 \begin{enumerate}
\item the flow is defined for all $t \in\r$, where if $t \to \infty,$ then $\gamma_t$ approaches the geodesic $s\mapsto (m,s)$
and  if $t \to - \infty,$ then $\gamma_t$ approaches the ideal boundary $\h^2_\infty$;
\item for fixed $t$, the solution $\gamma_t$ parametrizes an equidistant line of curvature $\kappa=\frac{v}{\sqrt{v^2+e^{2t}(w-1)^2}}$. 
\end{enumerate}\end{proposition}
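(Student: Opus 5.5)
We follow the same scheme as the two preceding propositions (vertical and horizontal directions). Since $\gamma$ evolves by geodesic translations along $V=(v,w)$ with $v^2+w^2=1$, the point $\gamma(s)$ moves along the geodesic of the general case $(3)$. The first step is to rewrite that parametrization in a form where the dependence on $s$ is explicit. Moving to the point $\gamma(s)=(x,y)$, we have $a=x+\frac{w}{v}y$, $R=\frac{y}{v}$ and $\cosh t_0=\frac{1}{v}$, so $\tanh t_0=-w$ (with the sign fixed by the initial direction). Expanding $\tanh(t+t_0)$ and $\operatorname{sech}(t+t_0)$ by the addition formulas and writing everything in terms of $e^{t}$ gives
$$\gamma(t,s)=\Big(x+\tfrac{w}{v}y+\tfrac{y}{v}\tanh(t+t_0),\ \tfrac{y}{v\cosh(t+t_0)}\Big).$$
This is a rational expression in $e^{t}$ with common denominator $-e^{2t}(w-1)+w+1$, the denominator appearing in \eqref{t2}.

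The second step is to compute, as in the horizontal case, $\partial_t\gamma$, the normal $N(t,s)$ and the curvature $\kappa(t,s)$ via \eqref{ke0}, all in terms of $x,y,x',y',x'',y''$ and $e^{t}$. We then insert these into \eqref{flow}, i.e. $\langle\partial_t\gamma,N\rangle=\kappa$. After clearing denominators this becomes a polynomial identity in $e^{t}$ (after squaring, if needed, to remove the square root in $|\gamma'|$). Its coefficients depend only on $s$, and since the identity holds for all $t$, each coefficient must vanish.

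The analysis of this coefficient system is the core step and, I expect, the main obstacle. The algebra is heavy, and it is not obvious a priori which combination the coefficients isolate. By analogy with the case $V=(1,0)$, where the coefficients gave $x'+y'=0$ and $x'y''-x''y'=0$, I expect:
\begin{itemize}
\item a leading coefficient that is a power of a linear form $vx'+(w-1)y'$ (which reduces to $x'+y'$ when $v=1$, $w=0$);
\item a remaining coefficient containing $y(x'y''-x''y')$.
\end{itemize}
The linear relation integrates to $vx+(w-1)y=\text{const}$, so $\gamma$ is a straight line of slope $v/(1-w)$. Substituting this back kills the remaining coefficient, giving $\kappa_e=0$ as in Proposition \ref{prop:A}. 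To keep the computation manageable, I would first impose the line ansatz $y=\lambda(m-x)$, verify that the equation forces $\lambda=v/(1-w)$, and then check separately that the leading coefficient forces linearity in general.

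The remaining steps are routine. With slope $v/(1-w)$, the formula for equidistant lines $y=\mu x+n$ gives $\kappa=1/\sqrt{1+\mu^2}$, which simplifies (using $v^2+w^2=1$) to $\frac{v}{\sqrt{2(1-w)}}$. Parametrizing the line by $x=s$ and $y=\frac{(1-w)}{v}(m-s)$ (up to the normalization of $m$) and substituting into the first step yields \eqref{t2}.

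For property $(2)$, for fixed $t$ the coordinates in \eqref{t2} are affine in $s$, so $\gamma_t$ is a Euclidean line. Its slope is the ratio of the $s$-coefficients, namely $\frac{e^{t}(w-1)}{v}$. Plugging this slope into $1/\sqrt{1+\mu^2}$ gives $\kappa=\frac{v}{\sqrt{v^2+e^{2t}(w-1)^2}}$.

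For property $(1)$, the denominator $e^{2t}(1-w)+(1+w)$ is positive for all $t$, so the flow exists for every $t\in\r$. As $t\to\infty$ the slope tends to $\pm\infty$ and the first coordinate tends to $m$, so $\gamma_t$ approaches the vertical geodesic through $(m,0)$. As $t\to-\infty$ the second coordinate tends to $0$, so $\gamma_t$ approaches $\h^2_\infty$.
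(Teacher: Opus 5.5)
Your overall scheme matches the paper's. You write the geodesic translation as a rational function of $e^{t}$ with denominator $-e^{2t}(w-1)+w+1$, reduce \eqref{flow} to $A_0+A_1e^{2t}+A_2e^{4t}=0$, and kill the coefficients. But the decisive step is only guessed, and the guess is wrong.

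\textbf{The leading coefficient and the slope.} The paper finds $A_2=((w-1)x'-vy')^3$. This gives $y'=\frac{w-1}{v}x'$, so $\gamma$ has slope $\frac{w-1}{v}$. Your linear form $vx'+(w-1)y'$ gives instead the perpendicular slope $\frac{v}{1-w}=\frac{1+w}{v}$. Your own consistency check already fails: at $v=1$, $w=0$ your form is $x'-y'$, not $x'+y'$, which contradicts the horizontal case (slope $-1$).

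\textbf{The curvature claim is false.} With $\mu=\frac{v}{1-w}$ you get $1+\mu^2=\frac{2}{1-w}$, hence $\kappa=\sqrt{(1-w)/2}$. The target is $\frac{v}{\sqrt{2(1-w)}}=\sqrt{(1+w)/2}$, and the two agree only when $w=0$.

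\textbf{Internal inconsistency.} The ansatz $y=\lambda(m-x)$ should force $\lambda=\frac{1-w}{v}$, not $\frac{v}{1-w}$. Your final parametrization $y=\frac{1-w}{v}(m-s)$ does have the correct slope, but it does not follow from anything you derived. The proposal carries three mutually inconsistent slopes and reaches \eqref{t2} only by matching the target.

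\textbf{How to settle the line case.} For $\gamma(s)=(s,\mu s+n)$, the translated curve $\gamma_t$ is the Euclidean line through the fixed boundary point $(-n/\mu,0)$. Its slope is $\frac{2e^{t}\mu}{e^{2t}(1-w+v\mu)+(1+w-v\mu)}$. The CSF of such a ray must have slope proportional to $e^{t}$. This follows as in Proposition \ref{prop:A}, or directly from $\alpha'=\sin\alpha\cos\alpha$ for the angle $\alpha$ of the ray. This forces $v\mu=w-1$. Your choice $v\mu=1+w$ gives slope $\mu e^{-t}$, so the ray rotates away from the vertical and solves the time-reversed flow; that is the sign error.

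\textbf{What is still needed.} To show $\gamma$ is a line at all, you need the actual leading coefficient $A_2$. It is a cube of a linear form in $x',y'$, so it integrates immediately.

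\textbf{Minor point.} With \eqref{t2} as written, the first coordinate tends to $\frac{mv}{1-w}$, not $m$, as $t\to\infty$. This imprecision is already present in the statement itself.
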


\begin{proof}
 Geodesic translations in the direction $(v,w)$ of $\gamma$ have   the form $$\gamma(t,s) = (a + R   \tanh(t-t_0), \frac{R}{\cosh(t - t_0)}),$$ where 
$$a  = x(s) + \frac{w}{v}y(s),\quad  R= y(s) \cosh(t_0), \quad  t_0 = \tanh^{-1} (w).$$
Then 
$$\gamma(t,s)=\left(\frac{\left(e^{2 t}-1\right) v y(s)}{-e^{2 t} w+e^{2 t}+w+1}+x(s),\frac{2 e^t y(s)}{-e^{2 t} w+e^{2 t}+w+1}\right).$$
By computing all the quantities in \eqref{flow}, we get an expression of the type
$$A_0(s)+A_1(s)e^{2t}+A_2(s)e^{4t}=0,$$
where
\begin{equation*}
\begin{split}
A_2(s)&=\left((w-1) x'-v y'\right)^3,\\
A_1(s)&=2 \Big(x' \left(\left(-3 v^2 w+v^2+2 w-2\right) y'^2+(w-1) y y''\right)+v \left(v^2-2\right) y'^3\\
&+v \left(3 w^2-2 w-1\right) x'^2 y'-(w-1) y x'' y'-(w-1)^2 (w+1) x'^3\Big),\\
A_0(s)&=-v^3 y'^3+x' \left(v^2 (3 w+1) y'^2-2 (w+1) y y''\right)\\
&+v \left(-3 w^2-2 w+1\right) x'^2 y'+2 (w+1) y x'' y'+(w-1) (w+1)^2 x'^3.
\end{split}
\end{equation*}
Therefore, all coefficients $A_n$ must vanish identically. From $A_2=0$, we get 
\begin{equation}\label{yy}
y'(s)=\frac{w-1}{v}x'(s).
\end{equation}
If $w = 1,$ then $y$ is a constant function and hence the initial curve $\gamma$ parametrizes a horocycle, but this is impossible since horocycle evolve by geodesic translations along $(0,1).$ Therefore, $w \neq 1.$

By substituting the expression of $y$ given in \eqref{yy}  in   $A_1$ and $A_0$, then $A_1$ and $A_0$ vanish trivially.  Definitively, by solving \eqref{yy}, the curve $\gamma$ is a straight-line. By putting $x(s)=s$, then  
$$y(s)=\frac{w-1}{v}s+m,\quad m\in\r.$$
Since the slope of this Euclidean straight-line is $\frac{w-1}{v}$, then $\gamma$  is an equidistant line of curvature $\frac{v}{\sqrt{2(1-w)}}$. By substituting $\gamma$ in the expression of $\gamma(t,s)$, we obtain \eqref{t2}.  Notice that  the curve $\gamma_t$ parametrizes a Euclidean straight-line of slope $\frac{e^t (w-1)}{v}$.

\end{proof}
\section{The angle as parameter in the flow}
\label{sec4}
 
Besides solitons and translating solutions   studied in the previous sections, another interesting class of solutions are convex ancient solutions. Those are the solutions of the CSF whose time interval is of the form $J = ( -\infty, T_0 ).$ In \cite{dhs}, the authors have been able to classify compact convex ancient solutions in the Euclidean plane by introducing \textit{the pressure function} $p = \kappa^2$ and then proving the following result:

\begin{theorem}[Theorem $1.1$ in \cite{dhs}]
Let $p(t, \theta) = \kappa^2(t, \theta)$ be the pressure function corresponding to a compact convex ancient solution of the curve shortening flow. Then $p$ is of the form $$p(t, \theta) = a(t) + b(\theta),$$ for some functions $a,b$.
\end{theorem}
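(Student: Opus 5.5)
We work in the Euclidean plane and parametrize each strictly convex curve $\gamma_t$ by the angle $\theta\in[0,2\pi)$ of its normal. The curvature then satisfies $\kappa_t=\kappa^2\kappa_{\theta\theta}+\kappa^3$. For the pressure $p=\kappa^2$ this becomes
$$p_t = p\,p_{\theta\theta}-\tfrac12 p_\theta^2+2p^2 .$$
The conclusion $p=a(t)+b(\theta)$ is equivalent to $p_{t\theta}\equiv 0$. So the plan is to find a quantity built from $p$ whose monotonicity forces $p_{t\theta}=0$ on an ancient solution.

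\textbf{Step 1: the functional.} Differentiating the equation in $\theta$ gives a linear equation for $p_\theta$, and differentiating in $t$ gives one for $q:=p_t$, namely $q_t=p\,q_{\theta\theta}-p_\theta q_\theta+(p_{\theta\theta}+4p)q$. Guided by the fact that $p=a(t)+b(\theta)$ makes $q$ independent of $\theta$, I would look for a functional $I(t)=\int_0^{2\pi} F(p,p_\theta,p_{\theta\theta})\,d\theta$. A first candidate is $\int (p_\theta^2-4p^2)\,d\theta$ with weights adjusted. The goal is an identity of the form
$$\frac{d}{dt}I(t)= -c\int_0^{2\pi} w(p)\,\big(q-\bar q(t)\big)^2\,d\theta \le 0,$$
where $\bar q$ is a suitable weighted average of $q$, or equivalently a form that is a nonnegative integral of $p_{t\theta}^2$ with a positive weight. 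The coefficients are fixed by integrating by parts in $\theta$ (all functions are $2\pi$-periodic) and completing squares. Since explicit solutions of the form $a(t)+b(\theta)$ exist (shrinking circles with $b\equiv0$, and Angenent ovals), the derivative must vanish on them. This constraint pins down the weights.

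\textbf{Step 2: control at the endpoints.} Fix an ancient convex compact solution on $(-\infty,T)$. Since $dI/dt\le 0$, it suffices to show that $I(t)$ has the same finite limit as $t\to-\infty$ and as $t\to T$.
\begin{itemize}
\item As $t\to T$: by Gage--Hamilton the curve shrinks to a round point. After rescaling, $p$ becomes asymptotically constant in $\theta$, so the normalized limit of $I$ is the value for a circle.
\item As $t\to-\infty$: we need a priori estimates. Hamilton's differential Harnack inequality for ancient solutions gives $\kappa_t\ge0$, so $p$ is nondecreasing in $t$. Combined with bounds on $p_\theta$ and $p_{\theta\theta}$ obtained from the maximum principle applied to the linear equations above, this should show that the backward rescaled limit is either a circle or a pair of translating grim reapers. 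In both cases the relevant quantity can be computed and matches its value at $T$.
\end{itemize}
Hence $I$ is constant, $dI/dt\equiv0$, and so $q=p_t$ is independent of $\theta$. Integrating gives $p=a(t)+b(\theta)$.

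The main obstacle is Step 2, specifically the backward asymptotics as $t\to-\infty$. Here the solution may become unboundedly elongated, with $p$ degenerating to $0$ near $\theta=0,\pi$, so the limit of $I$ must be handled with care. Step 1 is essentially an algebraic search that the exact solutions constrain. If the Harnack-based approach to the backward limit fails to give uniform control, I would instead normalize $I$ by the enclosed area or by $\max p$. I would then show that the normalized derivative is integrable on $(-\infty,T)$, using the area formula $A(t)=2\pi(T-t)$.
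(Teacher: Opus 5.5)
The paper gives no proof of this statement; it is quoted from \cite{dhs}. Your architecture is the one used there: a functional whose dissipation vanishes exactly on separable solutions, plus equal limits at both ends of the time interval. There is a genuine gap, though: neither step contains its actual content.

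\textbf{Step 1.} No functional is produced, and the unweighted version of the one you name already fails your own test. On the shrinking circle $p=\frac{1}{2(T-t)}$ one has $\int_0^{2\pi}(p_\theta^2-4p^2)\,d\theta=-8\pi p^2$, which is not constant in $t$. ``Adjusting weights until the exact solutions are respected'' is a search, not an argument that a suitable $I$ exists. The missing idea is to go one derivative higher and use $q=p_\theta$ rather than $p_t$. Then $q_t=p\,(q_{\theta\theta}+4q)$ exactly, and one integration by parts gives
\[
\frac{d}{dt}\int_0^{2\pi}\bigl(p_{\theta\theta}^2-4p_\theta^2\bigr)\,d\theta=-2\int_0^{2\pi}p\,\bigl(p_{\theta\theta\theta}+4p_\theta\bigr)^2\,d\theta=-2\int_0^{2\pi}\frac{p_{\theta t}^2}{p}\,d\theta\le 0,
\]
which is precisely the dissipation you were hoping for. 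This functional $J(t)$ vanishes on circles and Angenent ovals, since there $p_\theta\in\mathrm{span}\{\cos2\theta,\sin2\theta\}$. So the goal becomes $J(-\infty)=J(T^-)=0$.

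\textbf{Step 2.} This is where nearly all the work in \cite{dhs} lies, and your sketch does not reach it.

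\emph{The endpoint $T$.} $J$ is not scale invariant; it scales like $p^2$. So Gage--Hamilton convergence of $\tilde p=2(T-t)p$ to $1$ only gives $(T-t)^2J(t)\to 0$, and ``the normalized limit is the value for a circle'' says nothing about $J(T^-)$. In Fourier modes $J=2\pi\sum_k k^2(k^2-4)|\hat p_k|^2$, and $J$ is nonincreasing. So once $J(-\infty)=0$ is known, it suffices to show $\lim_{t\to T}J\ge 0$, for which you need $\hat{\tilde p}_{\pm1}=o(T-t)$. That is a decay-rate statement, not a convergence statement; these modes are tied to the closing condition $\int_0^{2\pi}e^{i\theta}p^{-1/2}\,d\theta=0$.

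\emph{The endpoint $-\infty$.} Here the situation is worse. For the Angenent ovals the parabolic rescalings degenerate to a multiplicity-two line. The grim reapers appear only after translating to the tips \emph{without} rescaling. What you actually need is that the unrescaled $p(\cdot,t)$ converges in $C^2$ to a profile $\lambda\cos^2(\theta-\theta_0)$, for which $J=0$.
\begin{itemize}
\item The Harnack inequality gives $p_t\ge 0$, but no control of $p_\theta$ or $p_{\theta\theta}$.
\item The maximum principle cannot supply that control either. The equation $q_t=p(q_{\theta\theta}+4q)$ has a positive reaction term, and an ancient solution has no initial time from which to propagate bounds.
\end{itemize}

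\emph{The fallback.} Normalizing $I$ by the area or by $\max p$ destroys the monotonicity the whole argument rests on. So it is a different, unspecified argument rather than a technical fix.
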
  
It is of our interest to examine to what extent is this theorem true in the hyperbolic plane. In order to do so, first we need to see what parameter can fill the role of the turning angle $\theta$ in the Euclidean plane and secondly, we will find all convex solutions whose pressure function can be written by separating the variables.

When considering convex curves in the Euclidean plane, one can use the turning angle $\theta$ for the parametrization which it is highly advantageous because of the geometric importance of such parameter. Because $\theta$ was considered as a parameter in \cite{dhs} (see also \cite{blt}), and since the hyperolic metric in the upper-half plane model   is conformal to the Euclidean one in $\r^2_{+}$, we will compute the evolution equations for the Euclidean angle $\theta$ in $\h^2.$ However, because the turning angle is closely related only to the Euclidean plane, we do not expect the situation to be the same in $\h^2,$ as was indicated already by Grayson in Sect. 2 of \cite{gr}. With this purpose, Grayson   introduced a parameter $\varphi$ which is defined purely analytically and which can be used instead of the turning angle for curves on arbitrary surfaces. Grayson defines globally an angle $\varphi=\varphi(t,s)$. First, the function $\varphi=\varphi(t,u)$ is defined on the initial curve $\gamma(s)=\gamma(0,s)$ by 
 $$\varphi(0,u)=\int_{0}^{s(u)}\kappa\,ds.$$
 Along $\gamma(s)$, this function $\varphi(0,s)$ coincides with the angle of $\gamma'(s)$ with a fixed direction of $\h^2$. Next, a correction factor $\rho$ is defined by the equation $\rho(\partial\varphi/\partial s)=\kappa$. Finally, the angle function $\varphi$ is defined by  the equation 
$$\rho\frac{\partial\varphi}{\partial t}=\frac{\partial \kappa}{\partial s}.$$
The next step consists of modifying the initial flow to preserve tangent directions. To this end, the parameter $t$ is replaced by $\tau$, where
$$\frac{\partial}{\partial\tau}=\frac{\partial}{\partial t}-\frac{1}{\kappa}\frac{\partial\kappa}{\partial s} \frac{\partial}{\partial s}.$$
  This allows us to obtain the next identity for the evolution of curvature when $\varphi$ is fixed: 
  \begin{equation}\label{eq3}
\kappa_\tau= \kappa^2 \kappa_{\varphi \varphi} + \kappa^3 - \kappa,
\end{equation}
 where we write the partial derivatives as subindices.  See \cite[Lem. 2.7]{gr}. We take into account that $\tau$ and $\varphi$ are independent variables because  it was proved in \cite[Lem.2.1]{gr} that 
$\partial\varphi/\partial \tau=0$. When using $\varphi$ as a parameter, the evolution equation \ref{eq3} is analogous to the one in the Euclidean plane (see \cite{gh}).

Define the following functions: 
$$p = \kappa^2,\quad  q = p_{\varphi}.$$

\begin{lemma} 
 The evolution equations for $p$ and $q$ are 
\begin{equation}\label{evo}
\left\{
\begin{split}
p_\tau &= p p_{\varphi \varphi} - \frac{1}{2}p_{\varphi}^2 + 2p^2 - 2p \\
 q_\tau&= p(q_{\varphi \varphi} + 4q) - 2 q
\end{split}\right.
\end{equation}
\end{lemma}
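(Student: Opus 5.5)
We derive both identities directly from \eqref{eq3}, treating $\tau$ and $\varphi$ as independent variables; this is legitimate since $\partial\varphi/\partial\tau=0$ by \cite[Lem. 2.1]{gr}. The plan is first to differentiate $p=\kappa^2$ in $\tau$ and rewrite everything in terms of $p$. Then we differentiate the resulting equation in $\varphi$ to obtain the equation for $q=p_\varphi$. No geometric input beyond \eqref{eq3} is needed; the whole argument is the chain rule.

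For the first equation, we compute $p_\tau=2\kappa\kappa_\tau$ and substitute \eqref{eq3}:
$$p_\tau=2\kappa^3\kappa_{\varphi\varphi}+2\kappa^4-2\kappa^2 .$$
The last two terms are already $2p^2-2p$. For the first term we use $p_\varphi=2\kappa\kappa_\varphi$ and $p_{\varphi\varphi}=2\kappa_\varphi^2+2\kappa\kappa_{\varphi\varphi}$. These give
$$\kappa\kappa_{\varphi\varphi}=\tfrac12 p_{\varphi\varphi}-\kappa_\varphi^2
\quad\text{and}\quad
\kappa_\varphi^2=\frac{p_\varphi^2}{4p}.$$
Hence
$$2\kappa^3\kappa_{\varphi\varphi}=2p\Big(\tfrac12p_{\varphi\varphi}-\frac{p_\varphi^2}{4p}\Big)=pp_{\varphi\varphi}-\tfrac12p_\varphi^2,$$
which yields the first line of \eqref{evo}. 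The division by $p$ is harmless: we work on convex curves, where $\kappa>0$. Alternatively, the factor of $p$ cancels before any division is actually performed.

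For the second equation, we differentiate the first line of \eqref{evo} in $\varphi$. We use that $\partial_\varphi$ and $\partial_\tau$ commute, because $\tau,\varphi$ are independent coordinates:
$$q_\tau=p_\varphi p_{\varphi\varphi}+pp_{\varphi\varphi\varphi}-p_\varphi p_{\varphi\varphi}+4pp_\varphi-2p_\varphi .$$
The terms $p_\varphi p_{\varphi\varphi}$ cancel. The remaining terms are $pq_{\varphi\varphi}+4pq-2q$, which is the second line.

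The only point requiring care is justifying that partial derivatives in $\tau$ and $\varphi$ commute and that $\varphi$ is a legitimate coordinate. This is precisely what Grayson's reparametrization provides, via \cite[Lem. 2.1, Lem. 2.7]{gr} together with $\kappa>0$. Beyond that, the computation is routine.
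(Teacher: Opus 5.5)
Your proposal is correct and follows the paper's proof essentially step for step: you compute $p_\tau=2\kappa\kappa_\tau$ via \eqref{eq3} and rewrite $\kappa\kappa_{\varphi\varphi}$ and $\kappa_\varphi^2$ in terms of $p_\varphi$ and $p_{\varphi\varphi}$. You then differentiate in $\varphi$, using that $\tau$ and $\varphi$ are independent variables; your remark that $2p\kappa_\varphi^2=\tfrac12 p_\varphi^2$ avoids dividing by $p$ is a small improvement on the paper's version of that step.
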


\begin{proof}
We have $p_\varphi=2\kappa\kappa_\varphi$, so $\kappa_\varphi=p_\varphi/(2\kappa)$. This implies
$$p_{\varphi\varphi}=\frac{p_\varphi^2}{2p}+2\kappa\kappa_{\varphi\varphi},$$
hence
$$\kappa_{\varphi\varphi}=\frac{1}{2\kappa}\left(p_{\varphi\varphi}-\frac{p_\varphi^2}{2p}\right).$$
Then using \eqref{eq3}, we have
$$p_\tau=2\kappa\kappa_\tau=2\kappa\left(\frac{p}{2\kappa}(p_{\varphi\varphi}-\frac{p_\varphi^2}{2p})+\kappa^3-\kappa\right)=p p_{\varphi \varphi} - \frac{1}{2}p_{\varphi}^2 + 2p^2 - 2p .$$
For the computation of $q_\tau$, we use that $\tau$ and $\varphi$ are independent variables.   This implies 
\begin{equation*}
\begin{split}
q_\tau&=(p_\varphi)_\tau=(p_\tau)_\varphi\\
&=\frac{\partial}{\partial\varphi}\left(pp_{\varphi\varphi}-\frac12p_\varphi^2+2p^2-2p\right)\\
&=pp_{\varphi\varphi\varphi}+4pp_\varphi-2p_\varphi=p(q_{\varphi\varphi}+4q)-2q.
\end{split}
\end{equation*}

\end{proof}

From now on, we replace the notation $\tau$ by $t$, such as Grayson suggests in \cite{gr} (see also \cite{gh}). However, we keep the variable $\varphi$ because we will study later the angle $\theta$. Because the curvature determines the curve, it is sufficient to solve the evolution equation for $p$. In order to do so, we consider the solutions that satisfy $q_t = 0$. Then $p_{\varphi t} = 0$ which implies that the function $p=p(t,\varphi)$ is of the form $  a(t) + b(\varphi)$ for some functions $a=a(t)$ and $b=b(\varphi)$. 

\begin{theorem}\label{t42}
Let $\{\gamma_t\colon t \in J\}$ be a convex solution to the CSF in $\h^2$ such that the corresponding pressure function $p$ can be written as 
$$p(t, \varphi) = a(t) + b(\varphi),$$
for some functions $a,b$. Then the solution   is a  family of shrinking circles, horocycles, equidistant lines or solitons.
\end{theorem}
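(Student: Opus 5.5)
We substitute the separated form directly into the evolution equation \eqref{evo} for $p$ and separate variables once more. Writing $p=a(t)+b(\varphi)$, the first equation of \eqref{evo} becomes
$$a'(t)=(a+b)b''-\tfrac12 b'^2+2(a+b)^2-2(a+b).$$
We collect the terms that do not depend on $a$ into
$$F(\varphi)=bb''-\tfrac12 b'^2+2b^2-2b.$$
Then the equation reads $a'-2a^2+2a = a\,(b''+4b)+F(\varphi)$. Differentiating with respect to $\varphi$ (legitimate since $t$ and $\varphi$ are independent, \cite[Lem. 2.1]{gr}) eliminates the left-hand side and gives
$$a(t)\,\bigl(b'''+4b'\bigr)+F'(\varphi)=0 .$$
This splits the proof into two cases according to whether $a$ is constant.

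\emph{Case 1: $a$ is not constant.} Then, fixing $\varphi$ and varying $t$, we get $b'''+4b'=0$ and $F'=0$. Hence $b=c_0+c_1\cos(2\varphi)+c_2\sin(2\varphi)$, and in particular $b''=-4(b-c_0)$ and $b'^2=4\bigl(c_1^2+c_2^2-(b-c_0)^2\bigr)$. Substituting into $F$, the quadratic terms in $b$ cancel ($-4+2+2=0$), the terms $\pm 4c_0b$ cancel, and what remains is $F=-2b+\mathrm{const}$. So $F'=0$ forces $b'=0$, i.e. $c_1=c_2=0$. Therefore $p=p(t)$ depends only on $t$.

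Since the curves are convex, $\varphi_s=\kappa/\rho\neq 0$, so $\varphi$ is a genuine parameter along each $\gamma_t$. It follows that $\kappa$ is constant along each $\gamma_t$, and each $\gamma_t$ is a circle, a horocycle, an equidistant line (or a geodesic). The remaining ODE $p'=2p^2-2p$, i.e. $\kappa'=\kappa^3-\kappa$, is exactly the evolution found in Examples \ref{ex:circle}, \ref{ex:circle2} and \ref{ex:circle3}. By uniqueness of the flow, the solution is the corresponding family of shrinking circles, horocycles or equidistant lines described there.

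\emph{Case 2: $a$ is constant.} Then $p=p(\varphi)$ is independent of $t$, so the curvature, as a function of Grayson's angle, is the same for every $\gamma_t$. The plan is to use that the curvature as a function of $\varphi$, together with $\rho$, determines the curve up to an isometry of $\h^2$. This should give $\gamma_t=g_t(\gamma_0)$ up to reparametrization, for isometries $g_t$ with $g_0=\mathrm{id}$. Using the semigroup property of the flow and uniqueness, $t\mapsto g_t$ is then a one-parameter group of isometries, hence conjugate to one of $\mathcal H$, $\mathcal P$ or $\mathcal R$. Proposition \ref{prs} then identifies $\gamma$ as a hyperbolic, parabolic or rotational soliton.

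I expect this last step to be the main obstacle. The difficulty is that $\rho$ is itself time dependent, so "the same $\kappa(\varphi)$" does not immediately mean "congruent curves." I would first try to show that $p_t=0$ forces $\rho_t=0$, using the defining relations $\rho\varphi_s=\kappa$ and $\rho\varphi_t=\kappa_s$. Failing that, I would take the weaker route of identifying the time-independent solutions with those moving by isometries, as in \cite{dt,wx}.
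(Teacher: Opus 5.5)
\emph{Case 1 ($a$ non-constant).} This part is correct and is the paper's argument in different packaging. Differentiating the $p$-equation in $\varphi$ is exactly how the $q$-equation of \eqref{evo} is derived, so you land on the second line of \eqref{evo2}, $(a+b)(b'''+4b')-2b'=0$. Once $b'''+4b'=0$, this already says $b'=0$, since $F'=b(b'''+4b')-2b'$; your trigonometric evaluation of $F$ is an unnecessary detour. The paper then integrates $a'=2a^2-2a$ explicitly, $a=1/(1+Ce^{2t})$, where you appeal to Examples \ref{ex:circle}--\ref{ex:circle3}; that is equivalent.

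\emph{Case 2 ($a$ constant).} This is the genuine gap: you do not prove it, you only outline a plan and call it the main obstacle. The paper disposes of it in one line. Since $\kappa(t,\cdot)=\sqrt{a+b(\cdot)}=\kappa(0,\cdot)$, the curves $\gamma_t$ and $\gamma_0$ ``have the same curvature'', so $\gamma_t=\Phi(t)\gamma_0$ for an isometry $\Phi(t)$. The flow therefore moves by isometries, and Proposition \ref{prs} identifies it as a soliton.

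Your suspicion that this congruence is not automatic is justified, but your first remedy cannot succeed. From $\rho\varphi_s=\kappa$, $\rho\varphi_t=\kappa_s$, $[\partial_t,\partial_s]=\kappa^2\partial_s$ and \eqref{dgr} one gets $\kappa\rho_t-\kappa_s\rho_s=-\kappa\rho$, i.e.\ $\rho_\tau=-\rho$. Since $\rho=1$ at $t=0$, every solution has $\rho=e^{-t}$, so $\rho_t=0$ never holds. As a check, for the circles of Example \ref{ex:circle} the $\varphi$-range $2\pi\cosh R$ is fixed while $\int\kappa\,ds=2\pi\cosh r(t)$, giving $\rho=\cosh r(t)/\cosh R=e^{-t}$.

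Moreover, if $\kappa$ is a fixed function of $\varphi$, then $ds=e^{-t}\,d\varphi/\kappa$. The curvature of $\gamma_t$ as a function of arclength is then $s\mapsto k_0(e^{t}s)$, where $k_0$ is the curvature profile of $\gamma_0$. This is a translate $k_0(s+c(t))$ for all $t$ only if $k_0$ is constant: differentiate at $t=0$ to get $s\,k_0'(s)=c'(0)\,k_0'(s)$. So a plan that tracks $\rho$ cannot produce congruent curves when $b$ is non-constant. Your fallback via \cite{dt,wx} presupposes exactly the motion by isometries that has to be established. As written, Case 2 is not proved. The paper closes it only by asserting the congruence, and your own correct worry about $\rho$ shows that this does not follow from equality of $\kappa$ as a function of $\varphi$ alone.
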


\begin{proof}
Equations \eqref{evo} become
\begin{equation}\label{evo2}
\left\{
\begin{split}
a' &= (a+b)b'' - \frac{1}{2}b'^2 + 2(a+b)^2 - 2(a+b) \\
0&= (a+b)(b'''+4b')-2b',
\end{split}\right.
\end{equation}
where $'$ denotes, in each case, the corresponding derivative with respect to the variable $t$ or $\varphi$. 
 If the function $a(t)=a$ is constant, then at any time $t$, the curves $\gamma(t, \cdot)$ and $\gamma(0, \cdot)$ have the same curvature function, namely $\kappa(t, \cdot) = \sqrt{a + b(\cdot)} = \kappa(0, \cdot)$. 
Because two curves have the same curvature, there exists an isometry of the ambient space $\Phi(t)$ such that $\gamma(t, \cdot) = \Phi(t)\gamma(0, \cdot)$. Since $t$ is an arbitrary time, the solution evolves by isometries,  that is, $\gamma$ is a soliton.  
 
Now assume that $a$ is not a constant function. We distinguish two cases.
\begin{enumerate}
\item Assume   $b(\varphi) $ is a constant function. Without loss of generality, we can assume that this constant is $0$. Then $p(t, \varphi) = a(t)$.  The second equation in \eqref{evo2} is trivial and the first one   reduces to  
$$a' = 2a^2 - 2a.$$ 
We now solve this equation. This equation can be written as
$$\frac{da}{a(a-1)}=2\, dt,$$
obtaining
$$a(t)=\frac{1}{1+C e^{2t}},\quad C\in\r.$$
Since the function $a$ is positive, we have three types of equations depending on the sign of the integration constant $C$:
\begin{equation}\label{a3}
a(t) = \left\{\begin{array}{lll} 
 &\frac{1}{1-Ce^{2t}},  & t<-\frac12\log(C), C>0\\ 
  & 1 &t\in\r\\
  &\frac{1}{1+Ce^{2t}},&t\in\r,  C>0.
  \end{array}\right.
\end{equation}
\begin{enumerate}
\item Case $a(t) =  \frac{1}{1-Ce^{2t}}$, where $t\in (-\infty,-\frac12\log(C))$ and $C>0$. Let $c=\frac12\log(C)$. Then $C=e^{2c}$. Define
$$r(t)=\cosh^{-1}(e^{-(t+c)}).$$
 Then we have 
$$\kappa(t, \varphi) = \sqrt{p(t, \varphi)} = \sqrt{a(t)} = \frac{1}{\tanh(r(t))}.$$ 
This implies that the  corresponding solution is a  family of shrinking circles.  
 \item Case  $a(t)=1$, $t\in\r$. Then $\kappa=1$. Thus $\gamma(t,\cdot)$ is a horocycle for all $t$. 

\item Case $a(t) =  \frac{1}{1+Ce^{2t}}$, where $t\in \r$ and $C>0$. Again, if $c=\frac12\log(C)$, let   
$$r(t)=\sinh^{-1}(e^{t+c}).$$
 Then
$$\kappa(t, \varphi) = \sqrt{p(t, \varphi)} = \sqrt{a(t)} = \frac{1}{\cosh(r(t))}.$$ 
In consequence, the solution is a family of equidistant lines. Notice that $k<1$.
 \end{enumerate}

\item Assume $b$ is not a constant function.   The second equation of \eqref{evo2} is  
\begin{equation}\label{e3}
(a+b)(b''' + 4b') - 2b' = 0.
\end{equation}
If $b'''+4b'=0$ identically, then \eqref{e3} implies $b'=0$. Thus $b$ is a constant function, which it is a contradiction. Thus  $b'''+4b'\not=0$.  From \eqref{e3}, we have 
 $$a(t) = \frac{2b'(\varphi)}{b''' (\varphi)+ 4b'(\varphi)} - b(\varphi).$$ 
The left-hand side of the equation above depends only on $t$ while the right-hand side depends only on $\varphi$. In particular, the function $a$ is constant. This contradiction proves that this case is not possible.  
\end{enumerate}
\end{proof}

\begin{corollary}
Let $\{\gamma_t\colon t \in J\}$ be a   closed  convex solution to the CSF in $\h^2$ such that the corresponding pressure function $p$ can be written as 
$$p(t, \varphi) = a(t) + b(\varphi),$$
for some functions $a,b$. Then the solution   is a  family of shrinking circles.
\end{corollary}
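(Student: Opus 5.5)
The plan is to start from Theorem \ref{t42} and rule out every alternative except shrinking circles, using that the curves are closed. By Theorem \ref{t42}, the solution $\{\gamma_t\}$ is a family of shrinking circles, horocycles, equidistant lines, or a soliton (the case $a$ constant). Horocycles and equidistant lines are never closed curves in $\h^2$: in the upper half-plane model they are Euclidean circles tangent to $\r^2_0$, or arcs and lines meeting $\r^2_0$, so each has at least one point in $\h^2_\infty$. More intrinsically, a closed convex curve in $\h^2$ has total curvature $\int\kappa\,ds = 2\pi + \mathrm{Area}>2\pi$ by Gauss--Bonnet. A closed curve of constant curvature $\kappa\le 1$ cannot exist, since by the classification in Section \ref{sec2} constant curvature curves with $\kappa\le1$ are unbounded. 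This eliminates cases (b) and (c) of the proof of Theorem \ref{t42}, leaving case (a), the shrinking circles of Example \ref{ex:circle}.

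The remaining work is to exclude closed convex solitons, that is, the case $a$ constant where $\gamma_t=\Phi(t)\gamma_0$ for isometries $\Phi(t)$. I will argue with the enclosed area. For a closed curve evolving by the CSF in $\h^2$, the first variation gives
\[
\frac{d}{dt}\mathrm{Area}(t) = -\int_{\gamma_t}\kappa\,ds = -\left(2\pi+\mathrm{Area}(t)\right),
\]
by Gauss--Bonnet for the enclosed disc, whose Gaussian curvature is $-1$. The length also strictly decreases, since $L'=-\int\kappa^2\,ds<0$. On the other hand, if $\gamma_t$ moves by isometries, both $L$ and $\mathrm{Area}$ are constant in $t$, contradicting these derivatives. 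Hence no closed soliton exists, except possibly a geodesic, which has $\kappa\equiv0$ and is not closed. Alternatively, one could apply the maximum principle to $\kappa=\langle N,X\rangle$ for each Killing field in Proposition \ref{prs}, but the area argument is uniform and simpler.

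The main subtlety I expect is a bookkeeping point. In Theorem \ref{t42}, ``solitons'' also covers the case $a$ constant with $b$ constant, i.e.\ constant curvature curves with stationary curvature. By the argument above, a closed one would be a circle whose area is constant, which is again impossible. So the constant-$a$ branch is entirely excluded. In the branch $b$ constant with $a$ nonconstant, only $a(t)=1/(1-Ce^{2t})$ gives $\kappa>1$, which forces circles; the resulting family is exactly the shrinking circles of Example \ref{ex:circle} after a time shift.
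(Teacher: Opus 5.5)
Your proof is correct and follows the same route as the paper: apply Theorem~\ref{t42}, discard horocycles and equidistant lines because they are not closed, discard solitons, and the shrinking circles are what remains. Where you go further is on solitons: the paper only asserts that there are no compact solitons, while you prove it with the area identity $A'(t)=-(2\pi+A(t))<0$ from the proof of Theorem~\ref{t51}. That argument excludes motion by arbitrary isometries $\Phi(t)$, not only by the one-parameter groups of Proposition~\ref{prs}.
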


\begin{proof} According to the Thm. \ref{t42}, the cases of horocycles and equidistant lines cannot occur, as well as of solitons because there are no compact solitons. 
\end{proof}

We finish this section with obtaining the evolution equation of the  Euclidean angle $\theta$ when $\h^2$ is viewed in the upper half-plane model. If $\gamma=\gamma(t,s)$, let   $\gamma(t,s)=(x(t,s),y(t,s))$. Define  $\theta=\theta(t,s)$ be the Euclidean angle given by  
\begin{equation}\label{da}
T(t,s)=y(t,s)(\cos\theta(t,s),\sin\theta(t,s)).
\end{equation}

\begin{lemma} Let $\gamma(t,s)$ be  a       solution to the CSF \eqref{flow}. If $\theta$ is the Euclidean angle defined in \eqref{da}, then 
\begin{equation}\label{ea}
\begin{split}
\frac{\partial \theta}{\partial t}& = \kappa_s + \kappa \sin \theta,\\
\frac{\partial \theta}{\partial s} &= \frac{\kappa-\cos\theta}{y}.
\end{split}
\end{equation}
\end{lemma}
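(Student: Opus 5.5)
The plan is to work entirely in Euclidean coordinates of the upper half-plane model and to use the conformality of the metric $\langle,\rangle=(dx^2+dy^2)/y^2$. Since $T$ is a hyperbolic unit vector, its Euclidean length is $y$, which is exactly what \eqref{da} encodes. Rotating by $\pi/2$ gives the hyperbolic unit normal $N=y(-\sin\theta,\cos\theta)$, consistent with the expression $N=\frac{y}{\sqrt{x'^2+y'^2}}(-y',x')$ used in the proof of Prop.~\ref{prs}. Let $e(\theta)=(\cos\theta,\sin\theta)$ and $n(\theta)=(-\sin\theta,\cos\theta)$ be the Euclidean unit tangent and normal. Then $\gamma_s=T=y\,e(\theta)$ and $N=y\,n(\theta)$. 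In particular $y_s=y\sin\theta$, obtained by reading off the second component of $\gamma_s$.

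For the spatial equation, I differentiate $\gamma_s=y\,e(\theta)$ in $s$. The $n$-component of $\gamma_{ss}$ is $y\,\theta_s$. I then compare this with the curvature formula \eqref{ke}, $\kappa=y\kappa_e+\cos\theta$, where $x'/\sqrt{x'^2+y'^2}=\cos\theta$ by \eqref{da}. I express the Euclidean curvature as $\kappa_e=\langle\gamma_{ss},n\rangle_e/|\gamma_s|_e^2$. Solving for $\theta_s$ gives the second identity in \eqref{ea}. Here I must track carefully the relation between the hyperbolic arc-length $s$ and the Euclidean arc-length $\sigma$, namely $d\sigma=y\,ds$, since this is exactly where the factor $1/y$ arises. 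I expect this normalization bookkeeping, checked against \eqref{ke0}, to be the only delicate point of this half.

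For the time equation, I first reparametrize so that the flow is purely normal, $\gamma_t=\kappa N=\kappa y\,n(\theta)$. This is allowed since \eqref{flow} only prescribes the normal part, and a tangential reparametrization does not change the geometric angle. Next I use $\partial_t\gamma_s=\partial_s\gamma_t+(\text{tangential term})\,\gamma_s$. The tangential term accounts for the change of the arc-length parameter in time. Because it is parallel to $e(\theta)$, it does not affect the angle.

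The rotation rate is then $\theta_t=\langle\partial_t\gamma_s,n\rangle_e/|\gamma_s|_e=\langle\partial_s(\kappa y\,n),n\rangle_e/y$. Expanding gives $\partial_s(\kappa y\,n)=\kappa_s y\,n+\kappa y_s\,n-\kappa y\theta_s\,e$. Only the first two terms contribute, so $\theta_t=\kappa_s+\kappa\,y_s/y=\kappa_s+\kappa\sin\theta$, using $y_s=y\sin\theta$. This is the first identity in \eqref{ea}.

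The main obstacle I anticipate is justifying the commutation step cleanly: either by working with a fixed parameter $u$ and writing $\partial_s=|\gamma_u|^{-1}\partial_u$, or by citing the standard commutator $[\partial_t,\partial_s]=\kappa^2\partial_s$. In either form I need to check that the extra term is tangential in the hyperbolic sense, and hence also in the Euclidean sense by conformality.
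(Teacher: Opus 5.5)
The time equation in your proposal is fine. The second identity is where it fails, and the computation you set up, carried through, refutes the stated formula instead of proving it.

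From $\gamma_s=y\,e(\theta)$ you get $\gamma_{ss}=y_s\,e(\theta)+y\,\theta_s\,n(\theta)$ and $|\gamma_s|_e^2=y^2$. Hence $\kappa_e=y\theta_s/y^2=\theta_s/y$; equivalently, $\kappa_e=d\theta/d\sigma$ with $d\sigma=y\,ds$. Substituting into $\kappa=y\kappa_e+\cos\theta$ gives $\theta_s=\kappa-\cos\theta$. The factor from $d\sigma=y\,ds$, which you expected to produce the $1/y$, in fact cancels the $y$ in front of $\kappa_e$.

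The stated formula $\theta_s=(\kappa-\cos\theta)/y$ is false. Take the geodesic $(x,y)=(-\cos\alpha,\sin\alpha)$, traversed with $x$ increasing. Then $\kappa=0$, $\theta=\pi/2-\alpha$, $\cos\theta=y$ and $ds=d\alpha/y$, so $\theta_s=-y$, whereas $(\kappa-\cos\theta)/y=-1$. The paper's proof obtains the $1/y$ from the assertion $\theta_s=\kappa_e$, i.e.\ it treats $s$ as Euclidean rather than hyperbolic arc-length.

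The corrected identity passes a consistency check. Apply $[\partial_t,\partial_s]=\kappa^2\partial_s$ to $\theta$, using $\theta_t=\kappa_s+\kappa\sin\theta$ and $\theta_s=\kappa-\cos\theta$. This reproduces exactly $\kappa_t=\kappa_{ss}+\kappa^3-\kappa$, i.e.\ \eqref{dgr}. The version with $1/y$ is not compatible with \eqref{dgr}; for instance, it fails along shrinking circles, and the spurious constraint in Theorem~\ref{t46} is a symptom of this. So the second identity cannot be proved as stated. What your method establishes is $\theta_s=\kappa-\cos\theta$.

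Your derivation of the time equation is correct and takes a lighter route than the paper's. The paper computes the covariant derivative $\nabla_{\partial_t}T$ with Christoffel symbols, obtains $(\theta_t+x_t/y)N$, and equates it with $\kappa_sN$, which comes from $[\partial_t,\partial_s]=\kappa^2\partial_s$. You work with flat coordinate derivatives and need only that $\partial_t\gamma_s-\partial_s\gamma_t$ is a scalar multiple of $\gamma_s$. That is immediate from $\gamma_s=\gamma_u/|\gamma_u|$, with no exact coefficient and no conformality argument required. Reading off the $n(\theta)$-component then gives $\theta_t=\kappa_s+\kappa y_s/y=\kappa_s+\kappa\sin\theta$ directly.

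One wording fix: $\theta_t$ is not invariant under tangential reparametrization, since a tangential velocity $\beta T$ adds $\beta\theta_s$. So the identity holds for the normal parametrization $\gamma_t=\kappa N$, which is also the convention the paper adopts in \eqref{xy}. State this as a convention rather than deducing it from the angle being geometric.
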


\begin{proof} 
The Levi-Civitta connection of $\h^2$ in the upper half-plane model is
$$\nabla_{\partial_x}\partial_x = \frac{1}{y}\partial_y,\quad
\nabla_{\partial_x}\partial_y = -\frac{1}{y}\partial_x,\quad
\nabla_{\partial_y}\partial_x = -\frac{1}{y}\partial_x,\quad
\nabla_{\partial_y}\partial_y =- \frac{1}{y}\partial_y.$$
We have
\begin{equation}\label{T1}
\frac{\partial T}{\partial t}=\frac{\partial}{\partial
t}\frac{\partial\gamma}{\partial s}=
\frac{\partial }{\partial s}(\kappa
N)+\kappa^2T=\frac{\partial\kappa}{\partial s}N.
\end{equation}
On the other hand,
\begin{equation*}
\begin{split}
\frac{\partial T}{\partial
t}&=\nabla_{\partial_t}(y\cos\theta\partial_x+y\sin\theta \partial_y)\\
&=(y_t\cos\theta-y\theta_t\sin\theta)\partial_x+(y_t\sin\theta+y\theta_t\cos\theta)\partial_y\\
&+y\cos\theta\nabla_{\partial_t}\partial_x+y\sin\theta\nabla_{\partial_t}\partial_y.
\end{split}
\end{equation*}
We know $\partial_t=\gamma'(t)=x_t\partial_x+y_t\partial_y$. Thus
$$\nabla_{\partial_t}\partial_x=-\frac{y_t}{y}\partial_x+\frac{x_t}{y}\partial_y.$$
$$\nabla_{\partial_t}\partial_y=-\frac{x_t}{y}\partial_x-\frac{y_t}{y}\partial_y.$$
Therefore,
\begin{equation}\label{T2}
\frac{\partial T}{\partial
t}=(y\theta_t+x_t)\left(-\sin\theta\partial_x+\cos\theta\partial_y\right)=
\frac{y\theta_t+x_t}{y}N=(\theta_t+\frac{x_t}{y})N.
\end{equation}
On the other hand,  from \eqref{flow} we have
$$\frac{\partial\gamma}{\partial t}=(x_t,y_t)=\kappa N=\kappa
y(-\sin\theta,\cos\theta).$$
This gives 
\begin{equation}\label{xy} x_t=-\kappa y\sin\theta,\quad y_t=\kappa y\cos\theta.
\end{equation}
 Using the value of $x_t$ and equating \eqref{T1} and \eqref{T2}, we obtain the first equation of \eqref{ea}. 

Finally, we know that $\theta_s=\kappa_e$ is the Euclidean curvature of $\gamma$.  Then the relation \eqref{kv}   becomes now $$\kappa=y\kappa_e+\cos\theta=y\frac{\partial\theta}{\partial s}+\cos\theta,$$
hence we get the second equation of \eqref{ea}.
\end{proof}

\begin{theorem} \label{t46}
Let $\gamma(t,s)$ be  a       solution to the CSF \eqref{flow}. If $\theta$ is the Euclidean angle defined in \eqref{da}, then 
\begin{equation}\label{ss}
(y-1)\left(k_{ss}+k_s \sin\theta\right)=0. 
\end{equation}
In particular, if $\gamma$ is not the horocycle $y=1$, we have 
$$k_{ss}+k_s \sin\theta=0. $$
\end{theorem}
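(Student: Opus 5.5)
The plan is to use the compatibility (equality of mixed partials) condition between the two equations of \eqref{ea}, namely $\theta_{ts}=\theta_{st}$. Differentiating the first equation of \eqref{ea} in $s$ gives
$$\theta_{ts}=\kappa_{ss}+\kappa_s\sin\theta+\kappa\cos\theta\,\theta_s .$$
Differentiating the second equation in $t$ gives
$$\theta_{st}=\frac{\kappa_t+\sin\theta\,\theta_t}{y}-\frac{(\kappa-\cos\theta)\,y_t}{y^2}.$$
The arc-length parameter $s$ depends on $t$ under the flow, so $\partial_t$ and $\partial_s$ do not commute. I will therefore first fix the commutator. From \eqref{T1}, the length element evolves by $\partial_t(ds)=-\kappa^2 ds$, so $\partial_t\partial_s=\partial_s\partial_t+\kappa^2\partial_s$. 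I expect the bookkeeping of this commutator term to be the main source of error. I will carry it explicitly, so the identity actually used is $\partial_t(\theta_s)=\partial_s(\theta_t)+\kappa^2\theta_s$.

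Next I substitute the known quantities. I take $y_t=\kappa y\cos\theta$ from \eqref{xy} and $\theta_t=\kappa_s+\kappa\sin\theta$ from \eqref{ea}. For $\theta_s$ I use $(\kappa-\cos\theta)/y$. For $\kappa_t$ I use the standard evolution of curvature in $\h^2$ under \eqref{flow}, namely $\kappa_t=\kappa_{ss}+\kappa^3-\kappa$; this is the arc-length analogue of \eqref{eq3}, obtained in Grayson's setting with curvature $-1$. After multiplying through by $y$, the compatibility identity becomes a polynomial relation in $\kappa,\kappa_s,\kappa_{ss},\sin\theta,\cos\theta$ and $y$.

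I then expand and cancel. I expect the terms $\kappa^3$, $\kappa$, $\kappa^2\cos\theta$ and $\kappa\cos^2\theta$ to cancel, using $\sin^2\theta+\cos^2\theta=1$. The surviving terms should be $\kappa_{ss}$ and $\kappa_s\sin\theta$, appearing once with coefficient $y$ (from $\partial_s\theta_t$, after multiplying by $y$) and once with coefficient $1$ (from $\kappa_t/y$ and $\sin\theta\,\theta_t/y$). This yields $(y-1)(\kappa_{ss}+\kappa_s\sin\theta)=0$, which is \eqref{ss}. If the cancellation is not exact, I will recheck the commutator and the sign convention of $N$ against \eqref{xy}.

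Finally, for the particular case: the set where $y\neq1$ is open. If $\gamma_t$ is not contained in the horocycle $y=1$, this set is dense on the curve, since otherwise $\gamma_t$ would contain an arc of $y=1$. On that dense set $\kappa_{ss}+\kappa_s\sin\theta=0$ holds, and by continuity it holds everywhere.
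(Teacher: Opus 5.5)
Your strategy is the paper's own: equate the mixed partials of $\theta$ using $\partial_t\partial_s=\partial_s\partial_t+\kappa^2\partial_s$, then substitute Grayson's equation $\kappa_t=\kappa_{ss}+\kappa^3-\kappa$. \textbf{The gap is the cancellation you anticipate but do not carry out: it does not happen.} With exactly your inputs, multiplying the identity $\partial_t\theta_s=\partial_s\theta_t+\kappa^2\theta_s$ by $y$ gives
\[
\kappa_t+\kappa_s\sin\theta+\kappa-\kappa^2\cos\theta=y(\kappa_{ss}+\kappa_s\sin\theta)+\kappa^3-\kappa\cos^2\theta .
\]
Inserting $\kappa_t$ removes $\kappa^3$ and $\kappa$. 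It does not remove $-\kappa^2\cos\theta$, which comes from the $y_t/y^2$ term, or $-\kappa\cos^2\theta$, which comes from $(\kappa\cos\theta+\kappa^2)\theta_s$. You end with
\[
(y-1)(\kappa_{ss}+\kappa_s\sin\theta)=-\kappa\cos\theta\,(\kappa-\cos\theta).
\]
This already fails on the shrinking circles of Example~\ref{ex:circle}. There the left side is $0$, while the right side equals $-\kappa(\kappa-1)\neq0$ where $\cos\theta=1$, since $\kappa>1$. So your inputs are inconsistent. Rechecking the commutator or the sign of $N$ will not help; both are correct.

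The defective input is the second equation of \eqref{ea}. Because $T=y(\cos\theta,\sin\theta)$ has hyperbolic length one, $s$ is hyperbolic arc length and Euclidean arc length is $y\,ds$. Hence $\theta_s=y\kappa_e$, not $\kappa_e$. The Christoffel symbols give this directly: $\nabla_sT=(\theta_s+\cos\theta)N$, i.e.\ $\theta_s=\kappa-\cos\theta$. With this correction your compatibility identity becomes $\kappa_t+\kappa_s\sin\theta+\kappa\sin^2\theta=\kappa_{ss}+\kappa_s\sin\theta+\kappa^3-\kappa\cos^2\theta$. That is just $\kappa_t=\kappa_{ss}+\kappa^3-\kappa$ again: it holds identically and carries no information.

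\textbf{In fact no argument can give \eqref{ss}, because the statement is false.} At $t=0$ it is a pointwise condition on the initial curve alone. By the fundamental theorem for curves in $\mathbb{H}^2$, you may prescribe $\kappa$ as any function of $s$ through a point $p$ with $y(p)\neq1$ and any tangent direction. Choose it so that $\kappa_{ss}(p)+\kappa_s(p)\sin\theta(p)\neq0$, close the curve up away from $p$, and run the flow.

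Equivalently, the dilations $z\mapsto\lambda z$ are isometries that preserve $\kappa$, $\kappa_s$, $\kappa_{ss}$ and $\theta$ while scaling $y$. So \eqref{ss} for all solutions would force $\kappa_{ss}+\kappa_s\sin\theta\equiv0$ for every solution.

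The paper's proof uses the same faulty equation and reaches \eqref{ss} only through further slips. For example, it writes $\partial_t\big((\kappa-\cos\theta)/y\big)$ with $-(\kappa_t+\sin\theta\,\kappa_t)/y$ in place of $+(\kappa_t+\sin\theta\,\theta_t)/y$.
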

\begin{proof}
We use the formula for the Lie bracket  $[\partial_t, \partial_s] = \kappa^2 \partial_s$: see \cite{gr}. Thus $\theta_{ts}=\theta_{st}-\kappa^2\theta_s$. Using this in combination with   \eqref{ea} and \eqref{xy}, we have
\begin{equation*}
\begin{split}
\kappa_{ss}&=\frac{\partial}{\partial s}\frac{\partial\theta}{\partial t}-\kappa_s\sin\theta-\cos\theta\kappa\theta_s\\
&=\frac{\partial}{\partial t}\frac{\partial\theta}{\partial s}-\kappa^2\theta_s-\kappa_s\sin\theta-\cos\theta\kappa\theta_s\\
&=\frac{\partial}{\partial t} \frac{ \kappa-\cos\theta}{y}-\kappa^2\theta_s-\kappa_s\sin\theta-\cos\theta\kappa\theta_s\\
&=-\frac{y_t}{y^2}(\kappa-\cos\theta)-\frac{\kappa_t+\sin\theta\kappa_t}{y}-\kappa(\kappa+\cos\theta)\theta_s-\sin\theta\kappa_s\\
&=\frac{1}{y}\left(-\kappa^3+\kappa+\kappa_t+(1-y)\sin\theta\kappa_s\right).
\end{split}
\end{equation*}
This implies 
$$y\kappa_{ss}+\kappa^3-\kappa=\kappa_t+(1-y)\sinh\theta\kappa_s.$$
To finish the proof, we use in this identity the expression of $\kappa_{t}$ given by 
\begin{equation}\label{dgr}
\kappa_t=\kappa_{ss}+\kappa^3-\kappa.
\end{equation}
See \cite[Lem. 1.3]{gr}. This yields \eqref{ss}.
\end{proof}

Thanks to this theorem, we obtain a similar result that Thm. \ref{t42} for the Euclidean angle   $\theta$.

\begin{corollary}\label{c47} 
Let $\{\gamma_t\colon t \in J\}$ be a convex solution to the CSF in $\h^2$ such that $\kappa(t,s)=a(t)+b(s)$ for some functions $a$ and $b$.  Then the solution   is a  family of shrinking circles, horocycles, equidistant lines or solitons.
 \end{corollary}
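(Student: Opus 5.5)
Following the pattern of Thm. \ref{t42}, the plan is to substitute $\kappa(t,s)=a(t)+b(s)$ into the relation of Thm. \ref{t46} and into the evolution equation \eqref{dgr}, and then separate variables.

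First, dispose of the exceptional factor in \eqref{ss}. If $y\equiv 1$ along the flow, the curve is the horocycle $y=1$, which already falls under the conclusion. Otherwise Thm. \ref{t46} gives, on an open set and hence everywhere by continuity,
$$\kappa_{ss}+\kappa_s\sin\theta=0,\qquad\text{that is,}\qquad b''(s)+b'(s)\sin\theta(t,s)=0 .$$
If $b'\equiv 0$, then $\kappa=a(t)$ is constant along each $\gamma_t$. Each curve then has constant curvature, so it is a circle, a horocycle or an equidistant line (geodesics are excluded by convexity). Equation \eqref{dgr} reduces to $a'=a^3-a$, which is integrated exactly as in case (1) of the proof of Thm. \ref{t42}. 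Up to time translation, $a$ is either $\coth r(t)$ with $r$ as in Example \ref{ex:circle}, or identically $1$, or of the form $1/\cosh r(t)$, matching Examples \ref{ex:circle}, \ref{ex:circle2} and \ref{ex:circle3}. So we obtain shrinking circles, horocycles or equidistant lines, and if $a$ is constant, a soliton as in Thm. \ref{t42}.

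If $b'\not\equiv 0$, we use \eqref{dgr} directly:
$$a'(t)=b''(s)+(a+b)^3-(a+b).$$
Differentiating in $s$ gives $0=b'''+\bigl(3(a+b)^2-1\bigr)b'$. Where $b'\neq 0$, this says $3(a(t)+b(s))^2=1-b'''/b'$, which depends only on $s$. Differentiating once more in $t$ gives $6(a+b)a'=0$. Since convexity gives $\kappa>0$, we conclude $a'=0$. Thus $a$ is constant, $\kappa(t,\cdot)=\kappa(0,\cdot)$ for all $t$, and, as in the proof of Thm. \ref{t42}, the curves are congruent by ambient isometries $\Phi(t)$, so the solution is a soliton. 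The relation $b''+b'\sin\theta=0$ is then only a consistency condition and is not needed.

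The main obstacle is the reliance on \eqref{dgr}, where $s$ is arc length at each time. Since $[\partial_t,\partial_s]=\kappa^2\partial_s$, the split $a(t)+b(s)$ must be read in the arc-length parametrization normalized at each time (for instance from a fixed base point). I would check that the $s$-differentiation above is legitimate in that sense. If it is not, I would fall back on the identity from Thm. \ref{t46} combined with \eqref{ea}: from $b''=-b'\sin\theta$, if $b'\neq0$ then $\sin\theta$ is independent of $t$, so $\theta_t=0$. The first equation of \eqref{ea} then gives $b'=-(a+b)\sin\theta$, and differentiating in $t$ forces $a'\sin\theta=0$. Since $\sin\theta$ cannot vanish identically on a curve whose curvature is nonconstant, this again gives $a'=0$.
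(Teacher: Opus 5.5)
Your treatment of $b'\equiv0$ matches the paper's, but for $b'\not\equiv0$ you take a different route. The paper substitutes $\kappa=a+b$ into Thm.~\ref{t46} to get $b''+b'\sin\theta=0$, infers that $\theta$ depends only on $s$, and reads $b'+(a+b)\sin\theta=0$ off the first equation of \eqref{ea}, which forces $a'=0$. You instead separate variables directly in \eqref{dgr}, as in case (2) of Thm.~\ref{t42}: $3(a(t)+b(s_0))^2$ is independent of $t$ wherever $b'(s_0)\neq0$, and $\kappa>0$ then makes $a$ constant. The paper's argument is shorter once Thm.~\ref{t46} is granted, but yours is the one to keep, because Thm.~\ref{t46} does not hold as stated. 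The isometries $z\mapsto\lambda z$ preserve $\kappa$, $s$ and $\theta$ but rescale $y$, so \eqref{ss} would force $\kappa_{ss}+\kappa_s\sin\theta=0$ along every solution, which already fails for generic initial curves. (The slip is in the second equation of \eqref{ea}: with $s$ the hyperbolic arc length, \eqref{ke0} gives $\theta_s=y\kappa_e=\kappa-\cos\theta$, and with this correction the commutator computation merely reproduces \eqref{dgr}.) So drop your opening appeal to Thm.~\ref{t46} and the $y\equiv1$ discussion, which your main argument never uses, and discard the fallback, which inherits the defect. The parametrization issue you raise is shared by the paper, which sets $\theta_t=0$ because ``$\theta$ depends only on $s$'', and it can be settled. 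Measure $s$ from a base point carried by the normal flow; then $\partial_t s=-\int_0^s\kappa^2\,d\sigma$ at fixed parameter, so \eqref{dgr} becomes $a'=b''+(a+b)^3-(a+b)+b'\int_0^s(a+b)^2\,d\sigma$. If $a$ is not constant it takes infinitely many values, so the coefficients of this cubic in $a$ are independent of $s$. In particular $3b+sb'\equiv C$, i.e.\ $(s^3b)'=Cs^2$, whence $b\equiv C/3$. If $a$ is constant, the curves are congruent and the flow is a soliton, as before.
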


\begin{proof}
Suppose that $\gamma$ is not a horocycle. Then \eqref{ss} implies 
$ b''+b'\sin\theta=0$.  We separate two cases.
\begin{enumerate}
\item Case $b'=0$. Let $b(s)=b_0$. This implies that $\kappa$ depends only on $t$. Then \eqref{dgr} is 
$a'=(a+b_0)^3-(a+b_0)$. Renaming the function $a(t)$ as $a(t)+b_0$, the solution of this equation is 
$$a(t)=\frac{1}{\sqrt{1+Ce^{2 t }}},\quad C\in\r.$$
Then $\kappa(t)$ coincides as \eqref{a3}, obtaining  that  the solution   is a  family of shrinking circles, horocycles or  equidistant lines. 
\item Case $b'\not=0$. From $ b''+b'\sin\theta=0$ we deduce that $\theta$ depends only on $s$. Then the first equation of \eqref{ea} implies $b'+(a+b)\sin\theta=0$. Since $b'\not=0$, then $\sin\theta\not=0$, hence $a$ is a constant function. Thus $\gamma$ is a soliton of the CSF.
\end{enumerate}
\end{proof}

\section{An estimate of the area for ancient solutions of CSF}\label{sec5}

Let $\{ \gamma_t \colon t \in \langle -\infty, T_0 \rangle \}$ be an ancient solution to the CSF \eqref{flow}. If $T_0 < \infty$,  then we can rescale the flow so that $T_0 = 0$. Therefore, without the loss of generality, we can assume that either $T_0 = 0$ or $T_0 = \infty$. Ancient convex solutions to the CSF have been classified both in the Euclidean plane and on the sphere  \cite{blt,bl}.So far, the only explicit ancient convex solutions in $\h^2$ are contracting circles, geodesics, equidistant lines and horocycles.

In the   case of simply closed solutions, we obtain the following computation of the area $A(t)$ along the evolution, which are motived by the results in \cite{bl}.  
\begin{theorem}
\label{t51}
Let $\{\gamma_t\colon t \in J\}$ be an   ancient simply closed solution to the CSF in $\h^2$. Then $\displaystyle \lim_{t \to - \infty}A(t) = + \infty$,  where $A(t)$ denotes the area enclosed by $\gamma(t, \cdot)$. 
\end{theorem}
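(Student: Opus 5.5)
The plan is to compute the evolution of the enclosed area and then integrate it backwards in time. For a simply closed curve evolving by \eqref{flow}, the first variation of area gives
\[
A'(t)=-\int_{\gamma_t}\kappa\,ds ,
\]
where $N$ points into the enclosed region. Let $\Omega_t$ be the enclosed domain. Apply Gauss--Bonnet to $\Omega_t$, using that $\h^2$ has Gaussian curvature $-1$ and that $\Omega_t$ is a disk, so $\chi(\Omega_t)=1$:
\[
\int_{\gamma_t}\kappa\,ds-A(t)=2\pi .
\]
This yields the linear ordinary differential equation
\[
A'(t)=-2\pi-A(t).
\]
As a sanity check, for the shrinking circles of Example \ref{ex:circle} we have $A=2\pi(\cosh r-1)$ and $\cosh r=\cosh R\,e^{-t}$, and these do satisfy this equation.

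Next I integrate. Writing $B(t)=A(t)+2\pi$, the equation becomes $B'=-B$, so
\[
B(t)=B(t_0)\,e^{-(t-t_0)}
\]
for all $t\in J$. Because the flow is ancient, this identity holds for every $t\le t_0$. Since $A(t_0)>0$, we have $B(t_0)>2\pi>0$. Letting $t\to-\infty$ then gives $A(t)=B(t_0)e^{t_0-t}-2\pi\to+\infty$.

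The same formula also yields the remark in the introduction. Forward in time, $A$ vanishes when $B(t_0)e^{t_0-t}=2\pi$. Hence the maximal time is finite, so $T_0<\infty$, and after the normalization described at the beginning of this section the time interval becomes $(-\infty,0]$.

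The main point requiring care is justifying the area variation formula and the Gauss--Bonnet identity on each slice. Specifically, I need to know that $\gamma_t$ stays embedded for every $t\in J$, so that $\Omega_t$ is a topological disk with $\chi=1$, and that the normal $N$ in \eqref{flow} is the inward one. Embeddedness along the flow holds by the standard avoidance principle on surfaces (Grayson \cite{gr}). The sign convention I will fix by checking it against Example \ref{ex:circle}. The variation formula itself follows from the standard computation $\frac{d}{dt}A=\int\langle\partial_t\gamma,-N\rangle\,ds$, since the tangential component of the velocity does not change the enclosed region.
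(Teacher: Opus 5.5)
Your argument is correct and is essentially the paper's: both derive the linear ODE $A'=-2\pi-A$ from Gauss--Bonnet and integrate it backward in time. The only difference is that you get $A'=-\int_{\gamma_t}\kappa\,ds$ from the first variation of area, whereas the paper differentiates $\int_{\gamma_t}\kappa\,ds$ directly using $\kappa_t=\kappa_{ss}+\kappa^3-\kappa$ and $\partial_t(ds)=-\kappa^2\,ds$. Your explicit remark that $A(t_0)+2\pi>0$ also settles a sign the paper leaves implicit.

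One small correction: the avoidance principle only propagates embeddedness forward in time. For an ancient flow, the embeddedness of every $\gamma_t$ therefore has to come from the hypothesis that the solution is simply closed, not from Grayson's result.
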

\begin{proof}
 The Gauss-Bonnet theorem in $\h^2$ says  $\int_{\gamma_t} \kappa(t,\cdot) \, ds = 2\pi + A(t)$. We differentiate with respect to $t$. Here we use 
$$ \frac{d}{dt} ds = - \kappa^2 ds.$$
See \cite[p. 75]{gr}. By using this identity together \eqref{dgr},  we have 
\begin{equation*}
\begin{split}
\frac{dA}{dt} &= \frac{d}{dt} \int_{\gamma_t}\kappa(t,\cdot) \, ds =\int_{\gamma_t}(\kappa_{ss}+\kappa^3-\kappa)(t,\cdot)\, ds-\int_{\gamma_t}\kappa(\kappa^2)(t,\cdot)\, ds\\
&= - \int_{\gamma_t} \kappa(t,\cdot) \, ds = - 2\pi - A(t).
\end{split}
\end{equation*}
The solution of this equation is $A(t)=c e^{-t}-2\pi$, $c\in\r$. At $t=0$, we have $A_0:=A(0)=c-2\pi$. Thus $c=A_0+2\pi$, obtaining
 $$A(t) = (A_0 + 2\pi)e^{-t} - 2\pi.$$ If we let $t \to - \infty$,  we   get the claim.
\end{proof}

\begin{corollary}
Let $\{\gamma_t\colon t \in ( - \infty, T_0 ) \}$ be an   ancient simply closed solution  to the CSF in $\h^2$. Then $T_0 = 0$.
\end{corollary}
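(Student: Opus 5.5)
The plan is to use the explicit area formula from the proof of Theorem \ref{t51}. By Gauss--Bonnet and the evolution $\kappa_t=\kappa_{ss}+\kappa^3-\kappa$, the enclosed area satisfies $A'(t)=-2\pi-A(t)$ on the whole existence interval. Hence
\[
A(t)=(A(t_1)+2\pi)e^{-(t-t_1)}-2\pi
\]
for any fixed reference time $t_1\in(-\infty,T_0)$. In particular $A$ is strictly decreasing, and $A(t)+2\pi$ never vanishes.

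First, I will show that $T_0<\infty$. A simply closed solution encloses positive area, so $A(t)>0$ for all $t<T_0$. If $T_0=\infty$, the formula would give $A(t)\to -2\pi<0$ as $t\to\infty$, which is a contradiction. So the flow cannot exist for all forward time.

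Next, I will identify $T_0$. Since the flow is ancient and simply closed, it is convex for all times in the sense used in this section. It then shrinks to a round point at the singular time, following Grayson's theorem on surfaces, with the circles of Example~\ref{ex:circle} as the model. I will invoke this to conclude $A(t)\to 0$ as $t\to T_0$.

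After the normalization made at the start of Section~\ref{sec5}, the time variable is shifted so that the reference constant in the formula is $A(0)=A_0$. The intended identification is $(A_0+2\pi)e^{-T_0}=2\pi$, so that $A(T_0)=0$. Combining this with the normalization convention, that $T_0$ finite can be translated to $0$, yields $T_0=0$.

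The main obstacle is that the corollary as stated is really a normalization statement. The content to prove is that $T_0$ is finite, with $A\to 0$ at $T_0$. Time translation invariance of \eqref{flow} then lets one set $T_0=0$, exactly as the paper does before Theorem~\ref{t51}. I would therefore state explicitly that ``$T_0=0$'' is understood up to this translation. The step requiring care is ruling out $T_0=\infty$, which the sign argument above handles directly from the area formula, and that is what I would present as the heart of the proof.
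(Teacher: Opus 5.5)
Your proposal is correct and matches the paper's proof: the decisive step is the same, namely that the explicit solution of $A'=-2\pi-A$ would force $A(t)\to-2\pi$ if $T_0=\infty$, and after that $T_0=0$ is just the time-translation normalization. The detour through Grayson's theorem is not needed and can be dropped, including the unproved claim that ancient simply closed solutions are convex. Your use of an arbitrary reference time $t_1$ is a small improvement, since $A(0)$ is not defined once $T_0$ has been normalized to $0$.
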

\begin{proof}
From the proof of the Thm. \ref{t51} we know that the area enclosed at the time $t$ is given by   $A(t) = (A_0 + 2\pi)e^{-t} - 2\pi$. If $T_0 = \infty$,  letting $t \to \infty$ we would get $A(t) \to -2\pi$ but that is impossible since the area must be non-negative. Therefore, it must be that $T_0 = 0$.
\end{proof}

\begin{remark} 
In \cite{gr}, it was proved    that if the maximal time $T_0$ is finite, then the curve collapses to a point. In consequence  $A_0 = 0$ and therefore the area enclosed at time $t$ is equal to  
\begin{equation}\label{aa}
A(t) =2\pi e^{-t} - 2\pi = 2\pi (e^{-t} - 1).
\end{equation}
\end{remark}

 \section*{Acknowledgements}
Rafael L\'opez has been partially supported by MINECO/MICINN/FEDER grant no. PID2023-150727NB-I00,  and by the ``Mar\'{\i}a de Maeztu'' Excellence Unit IMAG, reference CEX2020-001105- M, funded by MCINN/AEI/10.13039/ 501100011033/ CEX2020-001105-M.

\end{document}